\newcommand{\Aut}{{\operatorname{Aut}}}
\newcommand{\sF}{\mathcal{F}}
\newcommand{\PP}{\mathbb{P}}
 \newcommand{\sC}{\mathcal{C}}
\newcommand{\sO}{\mathcal{O}}
\newcommand{\sB}{\mathcal{B}}
 \newcommand{\sH}{\mathcal{H}}
\newcommand{\sX}{\mathcal{X}}
\newcommand{\sI}{\mathcal{I}}
 \numberwithin{equation}{section}
\theoremstyle{plain}
\newtheorem{thm}[equation]{Theorem}
\newtheorem{prop}[equation]{Proposition}
\newtheorem{lm}[equation]{Lemma}
\theoremstyle{definition}
\newtheorem{defn}[equation]{Definition}
\newtheorem{rk}[equation]{Remark}
\begin{document}

\title{Unirationality of certain universal families of cubic fourfolds.}

\author[H. Awada]{Hanine Awada}
\address{Institut Montpellierain Alexander Grothendieck \\ %
Universit\'e de Montpellier \\ CNRS %
Case Courrier 051 - Place Eug\`ene Bataillon \\ %
34095 Montpellier Cedex 5 \\ %
France}
\email{hanine.awada@umontpellier.fr}

\author[M. Bolognesi]{Michele Bolognesi}
\address{Institut Montpellierain Alexander Grothendieck \\ %
Universit\'e de Montpellier \\ CNRS %
Case Courrier 051 - Place Eug\`ene Bataillon \\ %
34095 Montpellier Cedex 5 \\ %
France}
\email{michele.bolognesi@umontpellier.fr}

\begin{abstract} 
The aim of this short note is to define the \it universal cubic fourfold \rm over certain loci of their moduli space. Then, we propose two methods to prove that it is 
unirational over the Hassett divisors $\sC_d$, in the range $8\leq d \leq 42$. 
By applying inductively this argument, we are able to show that, in the same range of values, $\sC_{d,n}$ is unirational for all integer values of $n$.
Finally, we observe that for explicit infinitely many values of $d$, the universal cubic fourfold over $\sC_d$ can not be unirational.
\end{abstract}

\maketitle 
\section {Introduction}
Since the celebrated result of Clemens and Griffiths \cite{CG} on the irrationality of smooth cubic threefolds, at least three generations of algebraic geometers have been working hard on the problem of rationality of cubic fourfolds. The search for the good invariant that would detect (ir)rationality has involved Hodge theoretical \cite{Has1}, homological \cite{kuz4fold}, K-theoretic \cite{GS} and motivic \cite{BP} methods. While the general suspicion is that the generic cubic fourfold  should be non rational as well, no explicit example of irrational cubic fourfold is known at the moment. A little more is known, and the expectation are slightly more precise, about rationality. Let $\sC$ denote the moduli space of smooth cubic fourfolds and $\sC_d\subset \sC$ the Hassett divisor of discriminant $d$, then a well known conjecture due to Harris-Hassett-Kuznetsov (see \cite{kuz4fold,Has1,AT}) states that there should be a countable infinity of Hassett divisors that parametrize rational cubic fourfolds. In particular these divisors should correspond to cubic fourfolds that have an associated K3 surface, in a Hodge theoretical or derived categorical sense (see \cite{kuz4fold,Has00}). In the second decade of this century, the search for rational examples of cubic fourfolds has concentrated more on cubics containing some special surfaces \cite{ABBVA,Nu,dp6,hanine}. This has been developed further in the works of Russo-Staglian\'o, and at the moment the HHK conjecture is known to be true for four low values of $d$ \cite{BD,BRS,RS1,RS2}. In all these cases, the special cubic fourfolds involved contain rational surfaces.

\medskip

In this note we are also interested in the birational geometry of cubic fourfolds, but under a slightly different point of view. Universal families are well known and studied objects in the realm of moduli spaces of curves, where forgetful maps are the algebraic geometer's everyday tools. Also in the context of K3 surfaces several results about these objects have appeared in the literature, also very recently \cite{FV1,FV2,Ma}. Probably because the birational geometry of a single cubic fourfold is already so difficult to understand, the problem of studying the same problems for universal families of cubics seems to have been overtaken. Thanks to the recent advances in the study of rationality of a single cubic fourfold, it seems however natural to ask what can be said in families, in particular over certain relevant divisors $\sC_d$ in the moduli space. In this paper we prove the following fact.

\begin{thm}\label{bigthm}
The universal cubic fourfolds $\sC_{d,1}$ are unirational in the range $8\leq d \leq 42$.
\end{thm}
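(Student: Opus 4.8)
The plan is to reduce Theorem~\ref{bigthm} to a case analysis over the finitely many admissible discriminants in the range, and for each of them to promote the known unirationality of the base $\sC_d$ to unirationality of the universal family by a projective bundle argument. By Hassett's classification the divisor $\sC_d$ is non-empty exactly for $d\in\{8,12,14,18,20,24,26,30,32,36,38,42\}$ in our range, and for each of these values the unirationality of $\sC_d$ is established in the literature (Hassett, Nuer, Lai, Russo-Staglian\'o) by exhibiting an explicit surface $S\subset\PP^5$ of a prescribed type such that the generic $X\in\sC_d$ contains such an $S$ \emph{in the appropriate numerical class}, and such that the parameter space $\sH_d$ of these surfaces --- a suitable open subscheme of a component of a Hilbert scheme of surfaces in $\PP^5$ --- is unirational, in fact rational in most cases. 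I would take as input precisely these two facts: $\sH_d$ is unirational, and the incidence $\{(S,X):S\in\sH_d,\ S\subset X\}$ dominates $\sC_d$.

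The first method: form the incidence variety
\[
\widetilde{\sC}_{d,1}=\bigl\{\,(S,X,x)\ :\ S\in\sH_d,\ \ S\subset X,\ \ x\in X\,\bigr\},
\]
equipped with the two projections $p\colon\widetilde{\sC}_{d,1}\to\sC_{d,1}$, $(S,X,x)\mapsto([X],x)$, and $q\colon\widetilde{\sC}_{d,1}\to\sH_d\times\PP^5$, $(S,X,x)\mapsto(S,x)$. Since $\sH_d$ dominates $\sC_d$ and the fibres of $\sC_{d,1}\to\sC_d$ are the cubics themselves, the map $p$ is dominant, so it suffices to prove that $\widetilde{\sC}_{d,1}$ is unirational. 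For $q$: given a general pair $(S,x)$, the cubic fourfolds containing $S$ and passing through $x$ form the linear system $\bigl|\,\sI_{S\cup\{x\}}(3)\,\bigr|$, a non-empty projective space whose dimension is constant on a dense open subset $U\subset\sH_d\times\PP^5$; there $q$ restricts to the projectivization of a locally free sheaf, hence to a Zariski-locally trivial projective bundle over $U$. Therefore $q^{-1}(U)$ is birational to $U\times\PP^r$, which is unirational because $U$ is open in $\sH_d\times\PP^5$ with $\sH_d$ unirational. Hence $\widetilde{\sC}_{d,1}$, and with it $\sC_{d,1}$, is unirational. There are no Brauer-theoretic subtleties here, as the fibres of $q$ are honest linear systems.

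The second method is more geometric and uses the surface $S$ directly: for a general $X\in\sC_d$ the projection of $X$ away from $S$ (or from its linear span $\langle S\rangle$) realises $X$ birationally as a conic or quadric bundle over a rational base, and this fibration varies algebraically with $(S,X)$; tracking a rational section or low-degree multisection of this fibration --- which in the scroll cases is furnished by the ruling of $S$ --- and spreading it out over $\sC_d$ produces, again, a dominant rational map onto $\sC_{d,1}$ from a variety visibly built out of rational varieties and projective bundles. This second approach has the side benefit of also producing lines on the generic $X$, which is what feeds the inductive step toward $\sC_{d,n}$.

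The uniform projective bundle formalism is not where the difficulty lies; the real work is the input it rests on. For the classical discriminants $d=8,12,14$ (a plane, a cubic scroll, a quartic scroll or a quintic del Pezzo surface) the surface $S$ and the dominance statement are standard. For $d=18,20,24,30,32,36,38,42$ one must go through the more intricate surface constructions available in the literature, and the delicate points are: (i) checking that each parameter space $\sH_d$ is genuinely unirational, not merely dominated by $\sC_d$; and (ii) verifying that the generic cubic containing $S\in\sH_d$ is smooth and has Picard lattice of discriminant \emph{exactly} $d$, so that the incidence variety really dominates $\sC_{d,1}$ and not a different Hassett divisor. For the second method the analogous delicate point is establishing the existence of the section or multisection of the conic/quadric bundle uniformly in the family.
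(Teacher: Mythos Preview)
Your first method is correct and is essentially the argument the paper gives in Section~3 (Theorem~\ref{ratxi} and the Proposition following it): form the incidence of triples $(S,X,x)$, project to $\sH_d\times\PP^5$, and observe that the fibres are the linear systems $|\sI_{S\cup\{x\}}(3)|$. The paper, however, only runs this argument for $d=26$ and $d=42$, where the parameter space of scrolls is tied to the universal K3 via Farkas--Verra; you are right that with Nuer's and Russo--Staglian\`o's unirational families $\sH_d$ in hand the same projective-bundle argument works uniformly across the whole range.

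Where the two diverge is in the second method. The paper's Section~4 does \emph{not} project from $S$ to a conic or quadric bundle; instead it invokes Kollar's theorem: over the function field of a rational parameter space dominating $\sH_d$, the surface $S$ (being rational in families) furnishes a rational point on the generic fibre of the universal cubic, and a smooth cubic hypersurface with a rational point is unirational over that field. This is cleaner than your projection sketch---which would require a case-by-case analysis of the birational type of the projection and of the existence of sections---and it has a structural payoff: Kollar's theorem iterates immediately to prove unirationality of $\sC_{d,n}$ for all $n$ (the paper's Theorem~\ref{nd}), since at each step the same surface $S$ still sits inside the cubic and supplies the needed rational point. Your first method would also iterate (replace $x$ by a tuple $x_1,\dots,x_n$ and note the fibres are still linear systems), but you do not say this, and your second method as written does not obviously give it.
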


More precisely, we propose two different methods, since each one seems to have its own interest, concerning different aspects of the geometry of cubic fourfolds. On one hand, we use some recent results of Farkas-Verra about universal K3 surfaces and use the relation with cubic fourfolds given by associated K3s.
On the other hand, we use the presentation of Hassett divisors as cubics containing certain rational surfaces, as done by Nuer and Russo-Staglian\'o, and Kollar's theorem on unirationality of cubics. The second argument, applied inductively, also allows us to prove the following fact.

\begin{thm}\label{nd}
The universal families $\sC_{d,n}$ are unirational for all values of $n$ if $8\leq d \leq 42$.
\end{thm}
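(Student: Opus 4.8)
The plan is to deduce Theorem \ref{nd} from Theorem \ref{bigthm} by induction on $n$, exploiting the tower of forgetful maps $\pi_n\colon \sC_{d,n}\to \sC_{d,n-1}$ that remembers only the first $n-1$ marked points of the cubic fourfold (equivalently, the projection $\sC_{d,n}=\sC_{d,n-1}\times_{\sC_d}X_d\to\sC_{d,n-1}$ where $X_d\to\sC_d$ is the universal cubic). The base case $n=1$ is exactly Theorem \ref{bigthm}. For the inductive step I would use the following general principle: if $B$ is a unirational variety and $f\colon Y\to B$ is a dominant morphism whose generic fibre is unirational \emph{over the function field $k(B)$}, then $Y$ is unirational. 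So the crux is to produce, for each $d$ in the range, a dominant rational map from projective space to the universal cubic $X_d\to\sC_d$ that is compatible with base change, i.e. to show $X_d$ is not merely unirational as a variety but that the cubic fourfold over the generic point of $\sC_d$ is unirational over $k(\sC_d)$ — and in fact over $k(\sC_{d,n-1})$, which is a field extension of $k(\sC_d)$.

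Concretely, I would revisit the second proof of Theorem \ref{bigthm}: in the range $8\leq d\leq 42$ the Hassett divisor $\sC_d$ is, by the work of Nuer and Russo--Staglian\'o, the locus of cubic fourfolds containing a surface $S$ of a prescribed type, and moreover the pairs $(Y,S)$ are parametrised by a rational variety. Such a cubic $Y\supset S$ is unirational by Kollár's theorem on unirationality of cubic hypersurfaces containing a rational surface (or by the classical construction projecting from $S$): the point is that this unirationality construction is \emph{canonical and relative}, producing a dominant rational map $\P^4_{\text{rel}}\dashrightarrow X_d$ over a rational parameter variety dominating $\sC_d$. Pulling this back along the dominant map $\sC_{d,n-1}\dashrightarrow \sC_d$, and using that $\sC_{d,n-1}$ is unirational by the inductive hypothesis, we get that the generic fibre of $\pi_n$ — which is the cubic fourfold over the generic point of $\sC_{d,n-1}$ — is unirational over $k(\sC_{d,n-1})$. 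Feeding this into the general principle above yields unirationality of $\sC_{d,n}$, completing the induction.

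The main obstacle is the relativity/field-of-definition subtlety: ordinary unirationality of the total space $X_d$ is not by itself enough to run the induction, one genuinely needs the fibrewise unirationality to hold over the (non-algebraically-closed) function field of the base. This is why the first proof of Theorem \ref{bigthm}, via associated K3 surfaces and Farkas--Verra, does not obviously iterate, whereas the Kollár/projection-from-a-rational-surface argument does: the rational surface $S$ (or at least a multisection of rational surfaces) is available over the base, so the projection construction is defined over $k(\sC_d)$ and hence over $k(\sC_{d,n-1})$. I would therefore phrase the inductive step carefully in terms of a relative version of the statement ``the universal cubic over $\sC_d$ admits a dominant rational map from a projective bundle over a rational base,'' and check that each of the explicit geometric descriptions of $\sC_d$ for $8\leq d\leq 42$ provides such a relative rational surface; granting that, the induction and the passage from $n-1$ to $n$ are formal.
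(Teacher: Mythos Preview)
Your proposal is correct and follows essentially the same route as the paper: induction on $n$, with the inductive step powered by Koll\'ar's theorem applied relatively, the needed rational point coming from the rational surface $S$ carried along via a rational parameter space dominating $\sC_d$. The paper makes this concrete by building, at each stage, an auxiliary unirational variety that dominates both $\sC_{d,n}$ and the surface-parameter space $\hat{\sS}_d$ (so that multiplying by $\PP^2$ produces the tautological section), whereas you phrase the same mechanism in function-field language; your parenthetical ``or at least a multisection'' is exactly the reason one must work over the rational cover $\hat{\sS}_d$ rather than over $\sC_d$ itself.
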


Of course, though the intuitive definition is clear, this requires a few checks that actually the universal family exists, which we make in Sect. 2. In the same Section, we also collect some important properties of cubic fourfolds, the rational scrolls they contain, their associated K3 surfaces and their Fano varieties of lines, that are then necessary in Section 3. Section 3 is devoted to the proof of unirationality of $\sC_{26,1}$ and $\sC_{42,1}$ via universal K3s, which is based on the construction of a couple of rational/unirational moduli quotients involving the families of scrolls inside our cubic fourfolds. 

\smallskip

In Sect. 4 we develop the second argument, which is slightly more general, and allows us to complete the missing cases of Thm. \ref{bigthm}. We use thoroughly the recent theory of rational surfaces contained in special cubic fourfolds of low discriminants, and combine this with Kollar's Theorem on unirationality of cubic hypersurfaces.

\smallskip

In Section 5 we underline the special nature of the cases we are considering, by comparing them with preceding results of  Gritsenko-Hulek-Sankaran \cite{GHS13} and V\'arilly-Alvarado-Tanimoto \cite{TVA}.

In fact, in \cite{GHS13} it is proven that the moduli space of polarized K3 surfaces of degree $2m$ has positive Kodaira dimension or is even of general type for an infinity of values (see Prop. \ref{gtk3} for details).

Furthermore one expects that the gaps in Prop. \ref{gtk3} can be filled in by using automorphic form techniques. V\'arilly-Alvarado and Tanimoto \cite{TVA} have started pursuing this project obtaining another relevant range of values where the Kodaira dimension of $\sC_d$ is positive (see Prop. \ref{gtCd}).

These results give us \it negative \rm information about the divisors $\sC_d$ where the universal cubic $\sC_{d,1}$ can not be unirational.

\begin{prop}\label{notuni}
The universal cubic fourfold $\sC_{d,1}$ is not unirational if:

\begin{enumerate}
\item $d > 80,\ d \equiv 2\ (mod\ 6),\ 4\not \vert  d$ and such that for any odd prime $p,\ p | d$
implies $p \equiv 1\ (mod\ 3)$;


\item $d=6n+2$ for $n > 18$ and $n \neq 20, 21, 25$;

\item $d=6n+2$, $n>13$ and $n\neq 15$;

\item $d=6n$, and $n >18$, $n\neq 20, 22, 23, 25, 30, 32$;

\item $d=6n$ for $n>16$ and $n\neq 18, 20, 22, 30$.
\end{enumerate}

\end{prop}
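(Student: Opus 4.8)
The plan is to combine the existence of the universal cubic fourfold (established in Section 2) with the two sources of ``negative'' geometric information cited in Section 5, namely Propositions \ref{gtk3} and \ref{gtCd}, via the following elementary principle: if $X \dashrightarrow Y$ is a dominant rational map of irreducible varieties and $X$ is unirational, then $Y$ is unirational, hence in particular $Y$ has negative Kodaira dimension and is not of general type. So a unirational $\sC_{d,1}$ would force the target of \emph{any} dominant rational map from $\sC_{d,1}$ to have Kodaira dimension $-\infty$. First I would record the forgetful morphism $\pi_d \colon \sC_{d,1} \to \sC_d$; since the fibers are the cubic fourfolds themselves (nonempty, irreducible, positive-dimensional), $\pi_d$ is dominant, so unirationality of $\sC_{d,1}$ implies unirationality of $\sC_d$. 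This already gives cases (4) and (5): by Proposition \ref{gtCd} (the V\'arilly-Alvarado--Tanimoto range) $\sC_d$ is of general type, a fortiori not unirational, for the listed values $d = 6n$ with the stated exceptions, so $\sC_{d,1}$ cannot be unirational either.

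Next I would bring in the associated K3 surface. For the values of $d$ in question — those admitting an associated K3, i.e. $d \equiv 0,2 \pmod 6$ and not divisible by $4$ or $9$, etc. — Hassett's theory gives a dominant (indeed generically finite, after passing to the relevant marked/level cover) rational map $\sC_d \dashrightarrow \sM_{2m}$ to the moduli space of polarized K3 surfaces of the appropriate degree $2m$, where $d$ and $m$ are related by the standard numerical recipe. Composing with $\pi_d$ yields a dominant rational map $\sC_{d,1} \dashrightarrow \sM_{2m}$. Hence unirationality of $\sC_{d,1}$ would force $\sM_{2m}$ to be unirational, in particular not of general type and of Kodaira dimension $-\infty$. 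Now invoke Proposition \ref{gtk3} (Gritsenko-Hulek-Sankaran): for the degrees $2m$ falling in their ``general type'' or ``positive Kodaira dimension'' list, this is a contradiction. Translating the GHS conditions on $2m$ back through the $d \leftrightarrow m$ dictionary produces exactly the arithmetic conditions in items (1), (2), (3) — the congruence $d \equiv 2 \pmod 6$ with $4 \nmid d$ and the prime condition $p \mid d \Rightarrow p \equiv 1 \pmod 3$ in (1) is precisely the image of the GHS general-type criterion, while (2) and (3) are the images of their ``$n$ large with finitely many exceptions'' statements for the two parities.

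The bookkeeping to carry out is: (a) check that for each listed $d$ an associated K3 genuinely exists, so that the rational map $\sC_d \dashrightarrow \sM_{2m}$ is available — this is where one must be careful, since Hassett's admissibility condition ($d$ even, $4 \nmid d$, $9 \nmid d$, and $p \nmid d$ for primes $p \equiv 2 \pmod 3$) has to be compatible with the hypotheses in each item, and in item (1) the prime condition is exactly designed so that admissibility holds; (b) verify the numerical correspondence $d \mapsto 2m$ sends the conditions of Proposition \ref{gtk3} to those of items (1)--(3); and (c) for items (4),(5), simply read off the general-type range of Proposition \ref{gtCd}. I expect the main obstacle to be purely organizational rather than conceptual: making sure the exceptional sets ($n \neq 20,21,25$ in (2); $n \neq 15$ in (3); the six excluded values in (4); the four in (5)) are transcribed correctly from the GHS/TVA tables and that in each case the associated-K3 map is actually dominant onto the relevant $\sM_{2m}$ (as opposed to onto a proper subvariety), which is guaranteed by Hassett's Torelli-type results for cubic fourfolds in those discriminants. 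Once (a)--(c) are in place, the non-unirationality in all five items follows immediately from the dominant-rational-map principle applied to $\pi_d$ (item (4),(5)) or to its composition with the associated-K3 map (items (1),(2),(3)).
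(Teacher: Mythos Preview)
Your overall strategy --- use the dominant forgetful map $\sC_{d,1}\to\sC_d$ together with the Kodaira-dimension results of Propositions~\ref{gtk3} and~\ref{gtCd} --- is exactly the paper's approach (this is the content of Lemma~\ref{easylemma}). However, you have misidentified the source of items (2) and (3). All four of (2)--(5) come from the V\'arilly-Alvarado--Tanimoto result (Proposition~\ref{gtCd}), which is stated directly for $\sC_d$; only item (1) descends from Gritsenko--Hulek--Sankaran. Your proposed detour through the associated-K3 map for (2) and (3) is not just unnecessary but actually fails: for $d=6n+2$ in the range of item (3) there are values --- e.g.\ $n=17$, $d=104=8\cdot 13$, so $4\mid d$ --- that violate Hassett's admissibility condition, hence no associated K3 exists and the rational map $\sC_d\dashrightarrow \sM_{2m}$ you invoke is simply not available there. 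Your bookkeeping step (a) would not go through.

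The fix is simpler than your plan: for all of (2)--(5), Proposition~\ref{gtCd} already gives non-negative Kodaira dimension of $\sC_d$ itself, so the forgetful map $\sC_{d,1}\to\sC_d$ alone suffices and nothing about K3 surfaces is needed. The K3 side enters only in item (1), and there the arithmetic hypotheses (in particular $d\equiv 2\pmod 6$) make the map $\sF_{(d+2)/2}\dashrightarrow\sC_d$ birational, so the GHS bound transfers directly to $\sC_d$; this is why the paper, following Nuer, states Proposition~\ref{gtk3} for $\sC_d$ from the outset rather than for the K3 moduli space.
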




We wish to thank Francesco Russo, Giovanni Staglian\'o and Sandro Verra for lots of discussions on related topics in the last few months. Thanks to Shouhei Ma and Zhiwei Zheng for giving us precise references and information.

\section{The existence of the universal cubic fourfold, and some properties of scrolls and associated K3 surfaces.}

As it is customary when discussing moduli spaces, one of the first questions one considers is the actual existence of a universal family. The invariant theory of cubic fourfolds, with respect to the natural $PGL(6)$-action, is known by the work of Laza \cite{lazaGIT,lazaperiod}. 

\begin{thm}\label{lazarone}\cite[Thm. 1.1]{lazaGIT}
A cubic fourfold $X\subset \PP^5$ is not GIT stable if and only if one of the following conditions holds:

\begin{enumerate}
\item $X$ is singular along a curve $C$ spanning a linear subspace of $\PP^5$ of dimension at most 3;
\item $X$ contains a singularity that deforms to a singularity of class $P_8,X_9$ or $J_{10}$.
\end{enumerate}

In particular, if $X$ is a cubic fourfold with isolated singularities, then X is stable if and only if X has at worst simple singularities.
\end{thm}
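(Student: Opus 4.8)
This is Laza's theorem, so one may simply cite it; but here is the line I would follow. The framework is Mumford's Geometric Invariant Theory: cubic fourfolds form the projective space $\P(W)$ with $W = H^0(\P^5,\sO(3))$, equipped with its natural $SL(6)$-linearization. By the Hilbert--Mumford numerical criterion, $[X]=[V(F)]$ is stable (resp.\ semistable) if and only if $\mu([X],\lambda)<0$ (resp.\ $\le 0$) for every nontrivial one-parameter subgroup $\lambda$; and since every $\lambda$ is $SL(6)$-conjugate to a diagonal $\lambda(t)=\mathrm{diag}(t^{r_0},\dots,t^{r_5})$ with $r_0\ge\cdots\ge r_5$ and $\sum r_i=0$, the criterion becomes combinatorial. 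Explicitly, $[X]$ fails to be stable (resp.\ semistable) precisely when, after a suitable linear change of coordinates, every monomial occurring in $F$ pairs non-negatively (resp.\ positively) with some nonzero balanced weight vector $(r_i)$; equivalently, the barycenter of the simplex of degree-three monomials in six variables fails to lie in the interior (resp.\ the convex hull) of the Newton polytope of $F$.

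The first task is the finite classification of the ``maximal'' non-stable and non-semistable monomial configurations. Following Mumford's method --- compare the classical analysis for cubic surfaces, and Allcock's for cubic threefolds --- one enumerates the finitely many faces of the weight polytope that can support a destabilizing $\lambda$, and for each records the corresponding linear family of cubic forms. Kempf's theory of the optimal (worst) destabilizing one-parameter subgroup shows that each non-semistable orbit carries an essentially canonical such $\lambda$, which organizes and shortens the bookkeeping; in practice this step is best run as a computer-assisted enumeration.

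The geometrically substantive step is to match each destabilizing stratum with the description in the statement. A destabilizing $\lambda$ exhibits $X$ as a weighted degeneration attached to a flag in $\P^5$, and reading off which monomials are permitted, one finds that either these are so few that $X$ must be singular along a curve spanning a linear subspace of $\P^5$ of dimension at most $3$, or else $F$ has an isolated singular point at which the Hessian has large corank and the $3$-jet is correspondingly degenerate, so that the singularity is not simple. Here one invokes Arnold's hierarchy of hypersurface singularities: the inequalities imposed by the weights cut out precisely the boundary between the simple ($ADE$) singularities and the first parabolic ones, which in Arnold's notation are $P_8=\tilde E_6$, $X_9=\tilde E_7$ and $J_{10}=\tilde E_8$. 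Conversely, a direct Hilbert--Mumford computation shows that a cubic fourfold with a singularity deforming to one of these three is not stable, while the explicit description of the non-stable strata shows that every non-stable cubic with isolated singularities is of this form. The final ``in particular'' clause is then the restriction of the general dichotomy to cubics with isolated singularities.

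The main obstacle is the size and delicacy of this matching. The combinatorial enumeration of destabilizing one-parameter subgroups is already sizeable (cubic forms in six variables), but the more serious point is to translate the resulting Newton-polytope inequalities into the sharp trichotomy ``simple singularity versus $P_8/X_9/J_{10}$ versus non-isolated locus'': this needs Arnold's normal forms at the borderline cases, together with a careful argument that the non-isolated part is cut out by exactly the condition ``singular along a curve spanning at most a $\P^3$'', and not by some coarser or finer incidence condition.
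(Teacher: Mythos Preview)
The paper does not give its own proof of this statement: it is quoted verbatim as \cite[Thm.~1.1]{lazaGIT} and used only to ensure that smooth cubic fourfolds are GIT-stable, so that the quotient $\sC=U//PGL(6)$ and the universal family make sense. Your opening remark (``this is Laza's theorem, so one may simply cite it'') is therefore exactly what the paper does.

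Your sketch of how Laza's argument goes---Hilbert--Mumford on the $SL(6)$-representation $H^0(\PP^5,\sO(3))$, enumeration of maximal destabilizing one-parameter subgroups via the Newton polytope, and identification of the borderline strata with Arnold's parabolic singularities $P_8,X_9,J_{10}$---is an accurate high-level summary of the method in \cite{lazaGIT}. For the purposes of this paper nothing more is needed; a bare citation suffices.
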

 
The GIT quotient that we are considering is slightly different than the one constructed in \cite{lazaGIT}, since we only consider smooth cubic fourfolds. That is, we consider the complement $U\subset |\sO_{\PP^5}(3)|$ of the discriminant and take the quotient $\sC:= U//PGL(6)$. The resulting quotient is a quasi-projective variety of dimension 20 and Thm. \ref{lazarone} assures that all point are stable in the GIT sense. We are in particular concerned by the divisors  $\sC_d$ inside $\sC$, with $8\leq d \leq 42$. By the seminal work of Hassett \cite{Has00}, and further developments \cite{Nu,RS1,RS2,FV1,FV2}, they can be characterized in terms of \it special \rm surfaces - not homologous to linear sections - contained in the generic cubic fourfold in each divisor. Recall in fact that for the general cubic fourfold $H^{2,2}(X,\mathbb{Z})=\mathbb{Z}$, generated by a linear section, and the locus where the rank of this group is bigger than one is the union of a countable infinity of divisors in $\sC$. For example, we will need the following descriptions.

\begin{defn}\label{defscrolls}
We have:
\begin{itemize}
\item $\sC_{26}:=\overline{\{\mathrm{Cubic\ fourfolds\ containing\ a\ 3-nodal\ septic\ scroll}\}}$
\item $\sC_{42}:=\overline{\{\mathrm{Cubic\ fourfolds\ containing\ a\ 8-nodal\ degree\ 9\ scroll}\}}.$
\end{itemize}
\end{defn}

Moreover, cubic fourfolds $X$ inside these two divisors have an associated K3 surface\footnote{Actually cubics in $\sC_{42}$ have two associated K3s, we will see more details about this later}. As it is well known, this means that the \it Kuznetsov component \rm of the bounded derived category of $X$ is equivalent to the derived category of a K3 surface, or equivalently that the Hodge structure of the nonspecial cohomology of the cubic fourfold is essentially the Hodge structure of the primitive cohomology of a K3 surface. We refrain to give more details about this, since these notions are now well known and the references in the literature are very good \cite{kuz4fold,Has00,Has1}. The associated K3 surfaces for cubics from $\sC_{26}$ have genus 14 and degree 26, whereas for cubics from $\sC_{42}$ they have genus 22 and degree 42. 

\medskip


More generally, thanks in particular to the work of Nuer \cite{Nu} (see also \cite[Sect. 4]{RS1}), it is known that  if $d\leq 42$, then the cubics in each $\sC_d$ can be defined as the cubics containing certain rational surfaces. These rational surfaces, that are introduced in \cite[Sect. 3]{Nu}, are obtained as images of $\PP^2$ via linear systems with prescribed fixed locus. For each value of $d$ we have a different linear system. We will see more about this in Sect. \ref{unirat}.





\medskip

Let $\sF_g$ denote the moduli space of genus $g$ K3 surfaces. For $X\in \sC_d$ with discriminant $d = 2(n^2 + n + 1)$,
with $n \geq 2$ (remark that 26 and 42 all verify this equality), Hasset shows \cite[Sect. 6]{Has00} that
there is an isomorphism

\begin{equation}\label{isofano}
F(X) \cong S^{[2]}
\end{equation}

between the Fano variety of lines $F(X)$ and the Hilbert scheme $S^{[2]}$ of couple of points of $(S,H)$ a polarized K3 surface with $H^2 = d$, the K3 \it associated \rm to $X$. The moduli spaces of the corrisponding K3 surfaces have dimension 19, the same dimension as divisors in $\sC$, and this assignment induces a rational map

\begin{equation}\label{birat}
\sF_{\frac{d+2}{2}} \dashrightarrow \sC_d
\end{equation}

that is birational for $d \equiv 2\ (mod\  6)$ and degree two for $d \equiv 0 (mod\ 6)$. In particular, for $\sC_{26}$ and $\sC_{42}$, we have rational maps

\begin{eqnarray*}
\sF_{14} & \stackrel{\sim}{\dashrightarrow} & \sC_{26}; \\
\sF_{22} & \stackrel{2:1}{\dashrightarrow} & \sC_{42}. 
\end{eqnarray*}

Another upshot of the isomorphism (\ref{isofano}) is that, once we fix a smooth cubic fourfold in $\sC_{26}$ or $\sC_{42}$, the family of scrolls contained inside $X$ is precisely parametrized by the associated K3. The construction, roughly speaking, goes as follows. 
For each $p\in S$, one defines a rational curve 

$$\Delta_p :=\{ y\in S^{[2]}:\{p\} = Supp(y)\}.$$

The image of $\Delta_p$ inside $F(X)$ then defines a (possibly singular) scroll $R_p \subset X$.

\begin{prop}\label{scrollk3}\cite{FV1,FV2}
\begin{enumerate}
\item The family of septimic 3-nodal scrolls inside a generic cubic fourfold $X$ in $\sC_{26}$ is the genus 14 K3 surface associated to $X$.
\item The family of 8-nodal degree 9 scrolls inside a generic cubic fourfold $X$ in $\sC_{42}$ is the genus 22 K3 surface associated to $X$.
\end{enumerate}
\end{prop}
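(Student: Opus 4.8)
The plan is to exploit the Hassett isomorphism \eqref{isofano}, $F(X)\cong S^{[2]}$, and to chase through the geometry of the rational curves $\Delta_p$. First I would fix a generic cubic fourfold $X$ in $\sC_{26}$ (resp. $\sC_{42}$) and recall that, for $d=2(n^2+n+1)$ with $n=3$ (resp. $n=4$), the associated polarized K3 surface $(S,H)$ with $H^2=d$ carries a natural map $S\to F(X)^{[?]}$: more precisely, each point $p\in S$ determines the rational curve $\Delta_p\subset S^{[2]}$ of non-reduced length-two subschemes supported at $p$, and under \eqref{isofano} this curve maps to a rational curve in $F(X)$, i.e. a one-parameter family of lines in $\PP^5$ sweeping out a surface $R_p\subset X$. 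The key computation is to identify the numerical type of $R_p$: I would compute its degree and its delta-invariant (number of nodes) by intersection theory on $S^{[2]}$, using that $\Delta_p$ is (numerically) a multiple of the class of the exceptional fibre of the Hilbert--Chow morphism $S^{[2]}\to S^{(2)}$, and pushing this forward along the Plücker embedding $F(X)\hookrightarrow \Gr(1,5)\hookrightarrow \PP^{14}$. For the relevant discriminants this should yield exactly a septic scroll with $3$ nodes (resp. a degree $9$ scroll with $8$ nodes), matching Definition \ref{defscrolls}.

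The second step is to argue that \emph{every} scroll of the prescribed type inside $X$ arises this way, i.e. that $p\mapsto R_p$ gives a bijection (or at least a birational isomorphism) between $S$ and the relevant Hilbert scheme of scrolls in $X$. For this I would show that the assignment $R_p\mapsto p$ can be reconstructed geometrically — e.g. the scroll $R_p$ determines, and is determined by, the point $p$ because the lines in the ruling of $R_p$ correspond precisely to the length-two schemes in $\Delta_p$, and the one rational curve $\Delta_p$ in $F(X)$ through a general such line is unique. The injectivity of $p\mapsto R_p$ then follows from $\Delta_p\neq\Delta_{q}$ for $p\neq q$ in $S^{[2]}$, and surjectivity (onto the component of the Hilbert scheme containing the expected scrolls) from a dimension count plus the fact, recorded in \cite{FV1,FV2}, that the generic cubic in these divisors was \emph{defined} by containing such a scroll, so the family of such scrolls is nonempty of the expected dimension two.

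Finally, I would upgrade this set-theoretic bijection to an isomorphism of the relevant moduli space of scrolls with $S$ itself, by checking that the family $\{R_p\}_{p\in S}$ over $S$ is flat with the correct Hilbert polynomial, hence induces a morphism $S\to \Hilb(X)$ to the appropriate component, and that this morphism is an isomorphism onto its image (it is injective and both sides are smooth projective surfaces, so one checks the differential is an isomorphism, e.g. via the identification of tangent spaces with normal bundle cohomology $H^0(R_p,N_{R_p/X})$). The case $\sC_{42}$ requires slightly more care because cubics there have \emph{two} associated K3 surfaces; I would remark that the two genus-$22$ K3s correspond to the two distinct families of such scrolls (or to the two square roots implicit in the $2\!:\!1$ map \eqref{birat}), and pick out the one matching the $8$-nodal degree $9$ scrolls. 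The main obstacle I anticipate is the explicit intersection-theoretic identification of the numerical invariants of $R_p$ — controlling the precise number of nodes, which is a subtle contribution of the singular fibres of the Hilbert--Chow map — together with verifying that no \emph{other} component of $\Hilb(X)$ parametrizes scrolls of the same discriminant; but since this is exactly the content attributed to \cite{FV1,FV2}, I would ultimately cite their analysis for these two points rather than redo it.
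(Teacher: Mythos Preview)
Your proposal follows essentially the same approach as the paper. In fact, the paper does not give its own proof of this proposition: it is stated with a citation to \cite{FV1,FV2}, preceded only by the brief sketch defining $\Delta_p\subset S^{[2]}$ and the associated scroll $R_p\subset X$ via the isomorphism $F(X)\cong S^{[2]}$. Your outline expands exactly this construction and correctly identifies that the delicate points (the numerical identification of $R_p$ as a $3$-nodal septic, resp.\ $8$-nodal nonic, scroll, and the completeness of the family) are what one must ultimately extract from \cite{FV1,FV2}.
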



Recall that the generic K3 surface in $\sF_{14}$ and $\sF_{22}$ has no automorphism, hence universal families exist:

\begin{eqnarray*}
f_{14}: \sF_{14,1} & \to & \sF_{14}; \\
f_{22}: \sF_{22,1} & \to & \sF_{22}.
\end{eqnarray*}

at least over an open subset of each moduli space. This has allowed the study of $\sF_{14,1}$ and $\sF_{22,1}$ by Faraks-Verra. They considered universal K3s as moduli spaces for couples $(X,R)$, where $X$ is a cubic fourfold, and $R$ is a scroll - belonging to a given class of surfaces - contained in $X$.  

\begin{prop}\cite{FV1,FV2}\label{uniratK3s}
The universal K3s $\sF_{8,1}$ and $\sF_{22,1}$ are unirational.
\end{prop}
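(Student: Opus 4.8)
The plan is to realize each universal K3 as the image of a rational (or unirational) incidence variety built from an explicit projective model of the general member of the family, and then to use that unirationality descends along dominant rational maps (if $A$ is unirational and $A\dashrightarrow B$ is dominant, then $B$ is unirational).

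For $\sF_{8,1}$ I would use the Mukai model of a general polarized K3 of genus $8$, i.e. of degree $2\cdot 8-2=14$: such a surface is a transverse linear section $S=\Gr(2,6)\cap\Lambda$, where $\Gr(2,6)\subset\PP^{14}$ is the Grassmannian in its Plücker embedding (of dimension $8$ and degree $14$, matching the K3 degree) and $\Lambda\cong\PP^{8}$ is a general codimension-$6$ subspace. Consider the incidence variety
\begin{equation*}
\sV=\{(\Lambda,p): \Lambda \text{ a }\PP^{8}\subset\PP^{14},\ p\in \Gr(2,6)\cap\Lambda\}.
\end{equation*}
Projecting to the factor $p\in\Gr(2,6)$ exhibits $\sV$ as a Grassmann bundle over the rational variety $\Gr(2,6)$, the fibre over $p$ being the Grassmannian of $\PP^{8}$'s through $p$; hence $\sV$ is rational. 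The assignment $(\Lambda,p)\mapsto(\Gr(2,6)\cap\Lambda,\,p)$ is $\Aut(\Gr(2,6))=PGL(6)$-invariant and, since by Mukai's theorem the general genus-$8$ K3 arises this way, it defines a dominant rational map $\sV\dashrightarrow\sF_{8,1}$. As $\sV$ is rational, $\sF_{8,1}$ is unirational.

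For $\sF_{22,1}$ I would instead use the cubic-fourfold description, which is the natural one here in view of Prop. \ref{scrollk3}(2). A point of $\sF_{22,1}$ is a pair $(S,p)$ with $S$ a genus $22$ K3 and $p\in S$; since the general such $S$ is associated, through the dominant map $\sF_{22}\dashrightarrow\sC_{42}$, to a cubic fourfold $X\in\sC_{42}$, and Prop. \ref{scrollk3}(2) identifies $S$ with the family of $8$-nodal degree $9$ scrolls contained in $X$, the marked point $p$ corresponds to such a scroll $R_p\subset X$. Thus $\sF_{22,1}$ is dominated by the incidence variety
\begin{equation*}
\sN=\{(X,R): R\subset X\subset\PP^{5},\ R \text{ an }8\text{-nodal degree }9\text{ scroll},\ X \text{ a cubic fourfold}\}.
\end{equation*}
I would then parametrize $\sN$ by projecting to the Hilbert scheme $\sH$ of such scrolls: the fibre over $R$ is the linear system $\PP(H^{0}(\PP^{5},\sI_{R}(3)))$ of cubics through $R$, a projective space of constant dimension over a dense open of $\sH$, so that $\sN\to\sH$ is birationally a projective bundle. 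Hence $\sN$ is unirational as soon as $\sH$ is, and $\sF_{22,1}$ is unirational by the descent remark; the $2{:}1$ nature of $\sF_{22}\dashrightarrow\sC_{42}$ (the two associated K3s) is harmless, since only dominance of $\sN\dashrightarrow\sF_{22,1}$ is needed.

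The main obstacle is the genus-$22$ step: one must show that the parameter space $\sH$ of $8$-nodal degree $9$ scrolls is rational, or at least unirational, and that the generic cubic through such a scroll is smooth and lies in $\sC_{42}$, so that $\sN$ genuinely dominates the pairs coming from $\sF_{22,1}$. The rationality of $\sH$ is where the explicit construction of these scrolls as images of $\PP^{2}$ (or of a Hirzebruch surface) under a prescribed linear system, in the sense of Nuer \cite{Nu} and Russo-Staglian\'o \cite{RS1}, is essential: parametrizing the defining linear systems should exhibit $\sH$ as unirational, while a tangent-space and dimension count controls the smoothness of the general cubic through $R$ and the constancy of $h^{0}(\sI_{R}(3))$ needed for the projective-bundle structure. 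By contrast, the genus-$8$ argument is essentially formal once the Mukai model is in hand.
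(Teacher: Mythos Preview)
First, a remark on the statement itself: the paper does not prove this proposition, it quotes it from \cite{FV1,FV2}. More importantly, the ``$\sF_{8,1}$'' in the statement is a typo for $\sF_{14,1}$: the sentence immediately preceding the proposition discusses $\sF_{14,1}$ and $\sF_{22,1}$, the whole of Section~\ref{fv} treats the pair $\sC_{26}$/$\sC_{42}$ whose associated K3s have genera $14$ and $22$, and the big diagram there identifies $H_n\cong\sF_{\frac{n+2}{2},1}$ for $n=26,42$. The Farkas--Verra papers \cite{FV1,FV2} are precisely about genera $14$ and $22$.

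Your argument for $\sF_{22,1}$ is essentially the Farkas--Verra approach as summarised in the paper (Prop.~\ref{biratXi} and the paragraph following it): one shows the Hilbert scheme $\mathcal{S}_{42}$ of $8$-nodal degree~$9$ scrolls is unirational, and that the incidence variety $\mathcal{H}_{42}=\{(X,R):R\subset X\}$ is a projective bundle over it via $|\sI_R(3)|$; then $H_{42}=\mathcal{H}_{42}//PGL(6)$ is birational to $\sF_{22,1}$. So on this half you are in agreement with the paper.

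On the first half you took the typo at face value and gave the (correct, and classical) Mukai argument for genus~$8$: the incidence variety of pairs $(\Lambda,p)$ with $p\in\Gr(2,6)\cap\Lambda$ is a Grassmann bundle over $\Gr(2,6)$, hence rational, and dominates $\sF_{8,1}$. That is fine as a proof of unirationality of $\sF_{8,1}$, but it is not what the paper (via \cite{FV1}) is claiming. For the intended case $\sF_{14,1}$ there is no comparably simple Mukai linear-section model, and your argument does not transfer; the actual proof in \cite{FV1} runs exactly parallel to what you wrote for genus~$22$, using $3$-nodal septic scrolls in cubics of $\sC_{26}$ (Def.~\ref{defscrolls}, Prop.~\ref{scrollk3}(1)) in place of the degree~$9$ scrolls. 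In other words, the same incidence/projective-bundle machinery you set up for $\sF_{22,1}$, applied with $n=26$, is the proof of the first half as well.
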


Let us now come to the definition of the object that we will study in Sect. 3. Similarly to the case of curves and K3 surfaces we give the following definition.

\begin{defn}
By \it universal cubic fourfold \rm over a divisor $\sC_d$ we mean the moduli space $\sC_{d,1}$ of 1-pointed cubic fourfolds.
\end{defn}

As it is customary over moduli spaces, the mere existence of a universal cubic fourfold needs a little bit of justification. 

\begin{prop}\label{rigidity}
The generic cubic fourfold in any divisor $\sC_d$ does not have projective automorphisms, hence a universal family of cubic fourfolds exists over an open subset of each divisor.
\end{prop}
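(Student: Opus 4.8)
The plan is to prove that a generic cubic fourfold in $\sC_d$ has trivial automorphism group, since then standard descent/stack-theoretic arguments produce the universal family over an open locus. First I would reduce to a dimension count for the \emph{whole} moduli space $\sC$: it suffices to show that the generic smooth cubic fourfold $X\subset\PP^5$ has $\Aut(X)=\{1\}$, because each $\sC_d$ is a divisor in $\sC$ and the locus of cubics with nontrivial automorphisms is a proper closed subset of $\sC$ which (as I will argue) does not contain any $\sC_d$. The first ingredient is the classical fact that for a smooth cubic fourfold the projective automorphism group coincides with the group of linear automorphisms preserving $X$, i.e.\ $\Aut(X)=\{g\in PGL(6):g(X)=X\}$; this follows because $\sO_X(1)$ is the unique generator of $\Pic(X)$ in the relevant sense (or more elementarily, a hypersurface of degree $\geq 3$ and dimension $\geq 2$ has its embedding reconstructed from $-K_X$).

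Next I would make the dimension count. The space $|\sO_{\PP^5}(3)|$ has dimension $\binom{8}{3}-1=55$, and $\dim PGL(6)=35$, so $\dim\sC=20$; moreover the generic smooth cubic is GIT-stable by Theorem~\ref{lazarone}, hence has finite stabilizer. To upgrade ``finite'' to ``trivial'' on a dense open set, I would stratify $|\sO_{\PP^5}(3)|^{sm}$ by the conjugacy class of the stabilizer in $PGL(6)$: for a fixed nontrivial subgroup $\Gamma\subset PGL(6)$, the cubics fixed by $\Gamma$ form a linear subspace $|\sO_{\PP^5}(3)|^\Gamma$ whose dimension can be computed from the representation of $\Gamma$ on $H^0(\PP^5,\sO(3))$, and one checks that $\dim|\sO(3)|^\Gamma + \dim(PGL(6)/N_{PGL(6)}(\Gamma)) < 55$ for every such $\Gamma$ (equivalently, the orbit of the $\Gamma$-fixed locus has positive codimension). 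The cleanest way to organize this is to invoke the known classification of automorphism groups of smooth cubic fourfolds, or simply to note that the generic element of the GIT quotient has trivial stabilizer whenever the stable locus is nonempty and the generic orbit has maximal dimension $35$ — which is exactly the content of $\dim\sC=20=55-35$.

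Finally, I would check that no Hassett divisor $\sC_d$ is contained in the ``non-trivial automorphism'' locus. Since $\dim\sC_d=19$ and each irreducible component of the non-trivial-automorphism locus has dimension $\leq 18$ by the above count (one dimension is lost because imposing a nontrivial $\Gamma$ forces at least a codimension-one condition after accounting for orbits), $\sC_d$ cannot be swallowed; alternatively, and more robustly, one produces for each $d\leq 42$ an \emph{explicit} cubic in $\sC_d$ with trivial automorphism group using the Nuer/Russo--Staglian\`o descriptions as cubics through a prescribed rational surface, since a general such cubic through a general (rigid, automorphism-free) surface inherits triviality. Once $\Aut(X)=\{1\}$ for $X$ generic in $\sC_d$, the moduli functor of $1$-pointed cubics is representable over the corresponding open $\sC_d^{\circ}\subset\sC_d$ — concretely, $\sC_{d,1}^{\circ}$ is the universal cubic fourfold $\sX\subset\PP^5\times\sC_d^{\circ}$ with its fibrewise structure — which establishes the proposition. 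The main obstacle is the representation-theoretic bookkeeping in the stratification step; I expect to sidestep most of it by citing the existing classification of automorphisms of smooth cubic fourfolds together with the elementary observation that a generic member of a $20$-dimensional GIT quotient of a $55$-dimensional linear system by a $35$-dimensional group has trivial stabilizer.
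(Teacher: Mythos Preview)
Your overall strategy---bound the dimension of the locus of smooth cubic fourfolds with a nontrivial projective automorphism and compare with $\dim\sC_d=19$---is exactly what the paper does. However, the paper's proof is a single citation: \cite[Thm.~3.8]{autom} shows that this locus has dimension at most $14$, so it cannot contain any $19$-dimensional divisor $\sC_d$. That is the entire argument.

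Your own route to the dimension bound has gaps. First, the reduction ``it suffices to show the generic smooth cubic has $\Aut(X)=\{1\}$'' is not a valid reduction: knowing the bad locus has codimension $\geq 1$ in $\sC$ says nothing about its intersection with a divisor. You notice this and later ask for codimension $\geq 2$, but the justification ``one dimension is lost because imposing a nontrivial $\Gamma$ forces at least a codimension-one condition'' is not an argument---it is exactly the statement to be proved. Second, the observation that $\dim\sC=55-35=20$ only tells you the generic stabilizer is \emph{finite}, not trivial; a common finite kernel would be invisible to this count. Third, your fallback of exhibiting explicit automorphism-free cubics via the Nuer/Russo--Staglian\`o surfaces only treats $d\leq 42$, whereas the proposition is stated for all $d$.

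In the end you say you would ``cite the existing classification of automorphisms of smooth cubic fourfolds''. That is precisely the paper's proof, and once you make that citation (which gives the much stronger bound $\leq 14$) all of your preliminary stratification discussion becomes unnecessary. So: correct destination, but the detours before the citation do not stand on their own.
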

 
\begin{proof}
This appears to be an easy corollary of \cite[Thm 3.8]{autom} where it is proven that the families of cubic fourfolds with a non-trivial projective automorphism have dimension at most 14. Hence clearly also the generic element of any divisor has no automorphism
\end{proof}

\begin{rk}
Given the result of \cite{autom}, Prop. \ref{rigidity} is straightforward and very general. For $d=26,$ and $42$, since the isomorphism (\ref{isofano}) holds, one could also argue in a more ``geometric'' way as follows. Any automorphism $\phi \in \Aut (X)$ of a smooth cubic fourfold $X$ induces an automorphism of the Fano variety of lines $F(X)$. Then, via the isomorphism $F(X)\cong S^{[2]}$ with the symmetric square of the associated K3, one can use the fact that the generic K3 has no non-trivial automorphism to conclude that $\phi$ is the identity. Remark also that, by Prop. \ref{scrollk3}, the fact that the associated K3 has no non-trivial automorphism implies that any $\phi \in \Aut (X)$ should send each scroll onto itself.
\end{rk}


\section{Unirationality for $\sC_{26,1}$ and $\sC_{42,1}$ via universal K3 surfaces.}\label{fv}

By the results of the preceding section, at least over a dense subset of $\sC_{26}$ and $\sC_{42}$, there exists a universal family of cubic fourfolds. For simplicity, and since we are however working in the birational category, we will still denote the two universal families by $\sC_{26,1} \to \sC_{26}$ and $\sC_{42,1}\to \sC_{42}$, without stressing the fact that they may not be defined everywhere. 

\medskip

In this section we will show that:

\begin{thm}
The universal cubic fourfolds $\sC_{26,1}$ and $\sC_{42,1}$ are unirational.
\end{thm}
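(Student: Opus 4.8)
The plan is to exploit the relation, described in Section 2, between cubic fourfolds in $\sC_{26}$ (resp. $\sC_{42}$) and their associated polarized K3 surfaces, together with the unirationality of the universal K3s $\sF_{14,1}$ and $\sF_{22,1}$ established by Farkas--Verra (Prop.~\ref{uniratK3s}). The key point is that the universal K3 $\sF_{g,1}$ parametrizes pairs $(S,p)$ with $S$ a genus $g$ K3 and $p\in S$ a point, and that by Prop.~\ref{scrollk3} the choice of such a point $p$ is exactly the data of a scroll $R_p\subset X$ in the associated cubic fourfold $X$. So I would set up a birational correspondence between $\sF_{g,1}$ and a moduli space of pairs (cubic fourfold, scroll inside it), and then relate the latter to the universal cubic fourfold $\sC_{d,1}$ by a fibration whose fibers are rational (or at least unirational).

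Concretely, for $d=26$ I would proceed as follows. First, consider the incidence variety $\sM$ parametrizing pairs $(X,R)$ where $X\in\sC_{26}$ and $R\subset X$ is a $3$-nodal septic scroll; by Prop.~\ref{scrollk3} the projection $\sM\to\sC_{26}$ has fibers isomorphic to the associated K3 surface $S$, and by the birationality of $\sF_{14}\dashrightarrow\sC_{26}$ together with the $\Delta_p$ construction, $\sM$ is birational to $\sF_{14,1}$, hence unirational. Next, I would build a second incidence variety $\sN$ parametrizing triples $(X,R,p)$ where $(X,R)\in\sM$ and $p\in X$ is a point; the projection $\sN\to\sM$ is a $\PP^4$-bundle (the fiber being $X$ itself, with $X$ unirational but in fact the total space of a $\PP^4$-bundle over a unirational base is unirational), so $\sN$ is unirational. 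Finally, the forgetful map $\sN\to\sC_{26,1}$, $(X,R,p)\mapsto(X,p)$, is dominant with fibers birational to the family of scrolls through a fixed point, hence dominant from a unirational variety; this forces $\sC_{26,1}$ to be unirational. The case $d=42$ is identical, using the $8$-nodal degree $9$ scrolls, the genus $22$ associated K3, and the unirationality of $\sF_{22,1}$; the only subtlety is the $2:1$ nature of $\sF_{22}\dashrightarrow\sC_{42}$ and the existence of two associated K3s, but a degree two (or finite) dominant rational map from a unirational variety still yields unirationality of the target, so this causes no real trouble.

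The technical heart, and the step I expect to require the most care, is the identification of $\sM$ (pairs $(X,R)$) with the universal K3 $\sF_{g,1}$ up to birational equivalence, rather than merely a fiberwise statement. Farkas--Verra precisely describe $\sF_{14,1}$ and $\sF_{22,1}$ as moduli of pairs $(X,R)$ of this type (this is how they prove Prop.~\ref{uniratK3s}), so I would invoke their construction directly: their parametrization realizes the generic pair $(X,R)$, and the map $(X,R)\mapsto X$ recovers the rational map \eqref{birat}. One then needs that the generic point of $\sM$ lies in the locus where the construction is defined and where $F(X)\cong S^{[2]}$, which follows from genericity of $X$ in $\sC_d$ and Prop.~\ref{rigidity}. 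The remaining steps — that a $\PP^n$-bundle over a unirational variety is unirational, and that a dominant rational map from a unirational variety has unirational image — are standard. So the proof reduces to assembling these pieces: unirationality of $\sF_{g,1}$, the Farkas--Verra moduli interpretation, and two elementary preservation properties of unirationality along bundles and dominant maps.
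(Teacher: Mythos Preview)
Your overall architecture matches the paper's: introduce the moduli space of pairs $(X,R)$ (the paper calls its Hilbert scheme version $\mathcal{H}_n$ and the quotient $H_n$), identify it with $\sF_{g,1}$ via Farkas--Verra (Prop.~\ref{biratXi}), pass to triples $(X,R,p)$ (the paper's $\mathcal{H}_{n,1}$, $H_{n,1}$), and then forget $R$ to land on $\sC_{d,1}$. The final step, dominance from a unirational source, is exactly what the paper does.

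The gap is in how you prove $\sN$ is unirational. You assert that $\sN\to\sM$, $(X,R,p)\mapsto(X,R)$, is a $\PP^4$-bundle. It is not: the fiber over $(X,R)$ is the cubic fourfold $X$ itself, not a projective space. Knowing that each fiber is unirational over $\mathbb{C}$ does not give unirationality of the total space; you would need unirationality of the generic fiber over the function field of $\sM$, which is a genuinely arithmetic statement about a cubic over a non-closed field. Your parenthetical acknowledges the fiber is $X$ and then still invokes the $\PP^4$-bundle principle, so the step as written does not go through.

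The paper sidesteps this by choosing a different projection out of the triple space. At the Hilbert scheme level it considers
\[
\phi_n:\mathcal{H}_{n,1}\longrightarrow \PP^5\times\mathcal{S}_n,\qquad (X,R,p)\longmapsto (p,R),
\]
forgetting the cubic rather than the point. The fiber over $(p,R)$ is the linear system $\PP H^0(\PP^5,\sI_{R\cup p}(3))$, a genuine projective space of constant dimension (11 for $d=26$, 5 for $d=42$) on the open locus $p\notin R$. Since $\mathcal{S}_n$ is unirational (Farkas--Verra), so is $\PP^5\times\mathcal{S}_n$, and then so is $\mathcal{H}_{n,1}$; descending by $PGL(6)$ and forgetting $R$ finishes. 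Note in particular that the paper never needs the unirationality of $\sF_{g,1}$ as such, only the unirationality of the Hilbert scheme of scrolls $\mathcal{S}_n$.

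Your route can be repaired, but not for free: since the scroll $R\subset X$ is rational and yields rational points of the generic fiber over $k(\sM)$, Koll\'ar's theorem makes that generic cubic unirational over $k(\sM)$, hence $\sN$ unirational. That, however, is precisely the mechanism of Section~\ref{unirat}, not the Section~\ref{fv} argument you are aiming for.
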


The strategy will be the same for the two cases, hence we will resume here below shortly the properties, that hold for both divisors, that we will need. In order to keep the notation not too tedious, when we will say that a certain property holds for $\sC_n$ we will assume $n=26,42$.\\
 
Let $X$ be a generic smooth cubic fourfold in $\sC_n$. We will denote by $K(X)$ the associated K3 surface and by $S(X)$ the family of scrolls (as defined in Def. \ref{defscrolls}) contained in $X \subset\PP^5$.  For $n=26$ septimic 3-nodal scrolls, for $n=42$ 8-nodal degree 9 scrolls. One can rephrase Prop. \ref{scrollk3} by saying that $K(X)$ is isomorphic to $S(X)$. More generally, we will denote by $\mathcal{S}_n$ the Hilbert scheme of scrolls contained in cubics in $\sC_n$ (those appearing in Def. \ref{defscrolls}), and by $S_n$ the 
$PGL(6)$-quotient of $\mathcal{S}_n$. These moduli quotients have been also considered in \cite{FV1,FV2,Lai}. Taking example from these papers let us give the following definition.



\begin{defn}
Let us denote by

$$\mathcal{H}_n= \{
(X, R) : R \subset X,\ [X] \in \sC_n,\ R \in S(X) \}$$

the ''nested" Hilbert scheme given by the couples ($X$ cubic fourfold whose class lives in $\sC_n$) + ($R$ rational normal scroll in $S(X)$); and by $H_n:= \mathcal{H}_n // PGL(6)$ the corresponding moduli quotient.
\end{defn}



From \cite{FV1,FV2} we have the following result.

\begin{prop}\label{biratXi}
The universal K3 surfaces $\sF_{n,1}$ are birational to $H_n$.
\end{prop}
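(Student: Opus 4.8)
The plan is to exhibit a birational map between $\sF_{n,1}$ and $H_n$ by identifying both with the moduli space of pairs $(X,R)$ consisting of a generic cubic fourfold $X\in\sC_n$ together with a scroll $R\in S(X)$ of the prescribed type. The key input is the isomorphism (\ref{isofano}), $F(X)\cong K(X)^{[2]}$, available because $n=26,42$ both satisfy $d=2(n^2+n+1)$, together with Prop. \ref{scrollk3}, which says that the family $S(X)$ of scrolls inside $X$ is \emph{canonically} isomorphic to the associated K3 surface $K(X)$ itself: for each $p\in K(X)$ the rational curve $\Delta_p\subset K(X)^{[2]}\cong F(X)$ sweeps out the scroll $R_p\subset X$, and $p\mapsto R_p$ is the isomorphism.

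First I would assemble the map $H_n\dashrightarrow\sF_{n,1}$. A generic point of $H_n$ is (the $PGL(6)$-orbit of) a pair $(X,R)$ with $[X]\in\sC_n$ and $R\in S(X)$. Sending $X$ to its associated polarized K3 surface $(K(X),H)$ defines the rational map $\sC_d\dashrightarrow\sF_{(d+2)/2}$ of (\ref{birat}) (inverse to the period-type map there; for $d\equiv 0\ (6)$ one must check this lifts consistently to the pair level, which it does because the two associated K3s of a cubic in $\sC_{42}$ are distinguished precisely by the choice of family of scrolls, see the footnote after Def. \ref{defscrolls}). Under the identification $S(X)\cong K(X)$ of Prop. \ref{scrollk3}, the scroll $R$ corresponds to a well-defined point $p_R\in K(X)$, and $(X,R)\mapsto\bigl((K(X),H),p_R\bigr)$ is the desired morphism to $\sF_{n,1}$, well-defined on a dense open set and $PGL(6)$-invariant, hence descending to $H_n$. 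Conversely, a generic pointed K3 $\bigl((S,H),p\bigr)\in\sF_{n,1}$ determines via (the inverse of) (\ref{birat}) a cubic fourfold $X$ with $K(X)\cong S$, and then the point $p$ picks out the scroll $R_p\subset X$ via $\Delta_p\subset S^{[2]}\cong F(X)$; this gives the inverse rational map $\sF_{n,1}\dashrightarrow H_n$. Checking that these two constructions are mutually inverse on dense opens is then formal, since both amount to the bijection $S(X)\cong K(X)$ of Prop. \ref{scrollk3} read in the two directions.

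The main obstacle I expect is not the construction of the maps but the verification that everything is \emph{generically well-defined and invertible}, i.e. that one really gets rational maps defined on dense open subsets rather than just a set-theoretic correspondence. Concretely: (i) one must know that for generic $X\in\sC_n$ the isomorphism (\ref{isofano}) holds and that the scroll-to-K3 dictionary of Prop. \ref{scrollk3} is an isomorphism of varieties (not merely a bijection of points), so that $R\mapsto p_R$ varies algebraically in families; (ii) for $d\equiv 0\ (6)$ (the case $\sC_{42}$), the map (\ref{birat}) is only $2{:}1$, so one must confirm that remembering the scroll $R$ (equivalently, the family of scrolls it belongs to) rigidifies the choice and makes the lifted map $H_{42}\dashrightarrow\sF_{22,1}$ birational rather than $2{:}1$ — this is exactly the content of the ``two associated K3s'' footnote and is the one genuinely case-specific point; (iii) genericity and the absence of automorphisms (Prop. \ref{rigidity}) are needed so that $PGL(6)$ acts with trivial stabilizers on a dense open of $\sH_n$, making $H_n=\sH_n/\!/PGL(6)$ behave like a geometric quotient there and the descent of the map legitimate. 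Granting the cited results of Farkas–Verra (Prop. \ref{scrollk3}) and Hassett (the isomorphism (\ref{isofano}) and the map (\ref{birat})), each of these checks is routine, and assembling them yields the asserted birationality $\sF_{n,1}\sim H_n$.
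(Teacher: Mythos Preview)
The paper does not actually prove this proposition: it is stated as a result taken from Farkas--Verra \cite{FV1,FV2}, with no argument given beyond the citation. Your reconstruction is precisely the argument carried out in those references: the identification $p\mapsto R_p$ of Prop.~\ref{scrollk3} between the associated K3 surface $K(X)$ and the family $S(X)$ of scrolls, read in families via the isomorphism (\ref{isofano}) and the map (\ref{birat}), yields mutually inverse rational maps between $\sF_{(n+2)/2,1}$ and $H_n$; you also correctly isolate the one genuinely delicate point, namely that for $n=42$ the $2{:}1$ map $\sF_{22}\dashrightarrow\sC_{42}$ is rigidified by the choice of scroll family, so that the correspondence on pairs is still birational.

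One small notational slip: when you write ``$n=26,42$ both satisfy $d=2(n^2+n+1)$'' you are overloading the letter $n$ (following an ambiguity already present in the paper). The intended statement is that $26=2(m^2+m+1)$ with $m=3$ and $42=2(m^2+m+1)$ with $m=4$, so that Hassett's isomorphism $F(X)\cong S^{[2]}$ is available in both cases.
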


In particular, in \cite{FV1,FV2} the authors show that $\mathcal{S}_n$ is unirational and $\mathcal{H}_n$ is birational to a projective bundle over $\mathcal{S}_n$, and hence unirational as well. We add just one more character to the plot, that is:

\begin{defn}
We denote by $\mathcal{H}_{n,1}$ the Hilbert scheme of the triples $(X, R, p)$ as follows:

$$\mathcal{H}_{n,1}= \{
(X, R, p)) : R \subset X,\ [X] \in \sC_n,\ R \in S(X),\ p \in X \}.$$

We will denote by $H_{n,1}$ the corresponding moduli  quotient by $PGL(6)$.
\end{defn}

Of course, as a consequence of Prop. \ref{rigidity}, this is generically a fibration in cubic hypersurfaces over $\mathcal{H}_n$. Finally we will denote by $\mathcal{H}_n^3$ the open subset complementary to the discriminant inside $|\mathcal{O}_{\PP^5}(3)|$\footnote{the was previously denoted by $U$ - but it seems more coherent to use this notation here}, and by $\mathcal{H}_ {n,1}^3$ the universal family over $\mathcal{H}_n^3$. 

 
\begin{thm}\label{ratxi}
The Hilbert scheme $\mathcal{H}_{n,1}$ is unirational.
\end{thm}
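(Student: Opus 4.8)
The plan is to build the unirationality of $\mathcal{H}_{n,1}$ on top of the unirationality of $\mathcal{H}_n$, which is already known from \cite{FV1,FV2} (recall that there $\mathcal{S}_n$ is shown to be unirational and $\mathcal{H}_n$ is a projective bundle over it, hence unirational). So first I would recall that by construction the natural forgetful morphism $\pi\colon \mathcal{H}_{n,1}\to \mathcal{H}_n$, $(X,R,p)\mapsto (X,R)$, is, by Prop. \ref{rigidity} and the remark following it, generically a fibration whose fiber over a general point $(X,R)$ is exactly the cubic fourfold $X\subset \PP^5$ itself. Thus $\mathcal{H}_{n,1}$ is birational to the total space of the universal cubic hypersurface over (an open subset of) the unirational parameter space $\mathcal{H}_n$.

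The key point is then Kollár's theorem on the unirationality of cubic hypersurfaces, applied in the relative setting: a cubic hypersurface over a field containing a line is unirational over that field, and more generally one gets a dominant rational map of degree a power of $2$ onto it from a projective space. Concretely, over the generic point $\eta$ of $\mathcal{H}_n$ we have the generic cubic fourfold $X_\eta$ \emph{together with the tautological line} — here the scroll $R$ is ruled by lines, so $X_\eta$ contains a line defined over the function field $k(\mathcal{H}_n)$ (pick, say, a section of the ruling of $R_\eta$, or simply a point of the base $\PP^1$ of the scroll, which is a rational curve and hence has a $k(\mathcal{H}_n)$-point after at worst a further unirational base change). This line gives, by Kollár's construction, a dominant rational map $\PP^4_{k(\mathcal{H}_n)}\dashrightarrow X_\eta$. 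Spreading this out over the unirational base $\mathcal{H}_n$ produces a dominant rational map from a projective bundle over $\mathcal{H}_n$ — which is itself unirational since $\mathcal{H}_n$ is — onto $\mathcal{H}_{n,1}$. Hence $\mathcal{H}_{n,1}$ is unirational.

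The step I expect to require the most care is the existence of the line over $k(\mathcal{H}_n)$ needed to invoke Kollár's theorem rationally over the base: one must be sure that the scroll $R$ actually supplies a rational family of lines in $X$ with a section (or at least a multisection that is itself unirational over $\mathcal{H}_n$), rather than merely lines over the geometric generic point. Since $R$ is a rational normal scroll over $\PP^1$, its ruling is a family of lines parametrized by a rational curve, and $\mathcal{H}_n$ being unirational one can always base-change along the corresponding $\PP^1$-bundle without leaving the unirational world; so this obstacle is manageable, but it is the place where the argument must be stated carefully. A second minor point is checking that Kollár's construction can be carried out in families / over non-closed fields uniformly enough to spread out — this is standard (Kollár's argument is geometric and works over any infinite field, in particular over $k(\mathcal{H}_n)$), and the resulting rational map, being defined by explicit polynomial formulas in the line and the cubic, spreads out over a dense open of $\mathcal{H}_n$.
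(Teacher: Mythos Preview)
Your argument is correct, but it takes a genuinely different route from the paper's.

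The paper does \emph{not} fiber $\mathcal{H}_{n,1}$ over $\mathcal{H}_n$ with cubic-fourfold fibers; instead it forgets the cubic and keeps the point, using the map
\[
\phi_n\colon \mathcal{H}_{n,1}\longrightarrow \PP^5\times \mathcal{S}_n,\qquad (X,R,p)\longmapsto (p,R).
\]
Over the open locus where $p\notin R$, the fiber is the linear system $\PP H^0(\PP^5,\sI_{R\cup p}(3))$ of cubics through $R$ and $p$, which has constant dimension (11 for $n=26$, 5 for $n=42$). Thus $\mathcal{H}_{n,1}$ is birational to a projective bundle over the unirational variety $\PP^5\times \mathcal{S}_n$, and one concludes without ever invoking Kollár.

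Your approach---fibering over $\mathcal{H}_n$ with cubic fibers, finding a rational section via the tautological scroll, and applying Kollár's theorem over the function field---is precisely the strategy the paper deploys in Section~\ref{unirat} for the moduli-space-level statements $\sC_{d,1}$. It works here as well, and the delicate point you single out (producing a $k(\mathcal{H}_n)$-point on the generic cubic from the scroll, possibly after a further unirational base change) is genuine but manageable exactly as you describe. The paper's argument is shorter and more elementary in this particular Hilbert-scheme setting, because the point $p$ is already moving freely in the ambient $\PP^5$ and the residual choice of $X$ is linear. Your argument, by contrast, is more uniform with the inductive machinery of Section~\ref{unirat} and would make the whole paper rely on a single technique; the cost is the extra care about rational sections that the linear-system argument sidesteps entirely.
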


\begin{proof}

Let us try to give a global picture of the situation.

\begin{equation}\label{diagrammone}
\xymatrix{
H.S.\ level& (R,X,p) \in & \mathcal{H}_{n,1}\ar[dr]\ar[dl] \ar@/_/[ddd]\ar[rr]^{forget\ scroll} &  &
\mathcal{H}^3_{n,1} \ar@/_/[ddd]\ar[d]  \ni (X,p) \\
{} \ar[ddd]^{//PGL(6)} & \PP^5 \times \mathcal{S}_n \ar[ddd]\ar[dr] & & \mathcal{H}_n \ar[dl]\ar[ddd]\ar[r]^{univ.\ K3} & \mathcal{H}_n^3\ar@/^/[ddd]  \ni X  \\
& & \mathcal{S}_n\ar@/^/[ddd] & & \\
& & H_{n,1}\ar[rr]^{forget\ scroll}\ar[dl]\ar[dr] & & \sC_{n,1}\ar[d] \\
& (\PP^5 \times \mathcal{S}_n)//PGL(6) \ar[dr]& & H_n \cong 
\mathcal{F}_{\frac{n+2}{2},1} \ar[r]^{\ \ \ \ univ.\ K3}\ar[dl] & \sC_n \\
M.S.\ level& & S_n & &  }
\end{equation}

Here above all vertical arrows shall be intended as quotients by the $PGL(6)$-action. On the \it upper \rm level of the diagram all spaces are Hilbert schemes, whereas on the \it lower \rm they are moduli quotients (H.S  and M.S. for short). By the unirationality of $\mathcal{S}_n$ we have the unirationality of $\PP^5 \times \mathcal{S}_n$. Then we observe that there is a natural forgetful map 

\begin{eqnarray}
\phi_n: \mathcal{H}_{n,1} & \to & \PP^5 \times \mathcal{S}_n;\\
(X,R,p) & \mapsto & (p,R).
\end{eqnarray}

Namely, the fiber over a given couple $(R,p)$ is exactly the linear system \\ $\PP H^0(\PP^5,\sI_{R\cup p}(3))$ of cubics in $\PP^5$ containing $R$ and $p$.
Over an open subset of $\PP^5 \times \mathcal{S}_n$ (i.e. the locus where $p \not\in R$) the rank does not change and it is 11, and 5 respectively for $d=26$ and $42$. This in turn implies that $\mathcal{H}_{n,1}$ is unirational.
\end{proof} 

\begin{prop}
The universal cubic fourfold $\sC_{n,1}\to \sC_n$ is unirational.
\end{prop}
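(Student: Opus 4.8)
The plan is to deduce this directly from Theorem \ref{ratxi} by descending the left column and bottom row of diagram (\ref{diagrammone}). The underlying principle is that unirationality is inherited by dominant rational images: if there is a dominant rational map $\P^N \dashrightarrow Y$ and a dominant rational map $Y \dashrightarrow Z$, then $Z$ is unirational. So I would exhibit a chain of dominant maps from $\mathcal{H}_{n,1}$ down to $\sC_{n,1}$.

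First, by Theorem \ref{ratxi} the Hilbert scheme $\mathcal{H}_{n,1}$ is unirational. The $PGL(6)$-quotient
\[
\mathcal{H}_{n,1} \dashrightarrow H_{n,1}=\mathcal{H}_{n,1}//PGL(6)
\]
is dominant — it is surjective onto a dense open of $H_{n,1}$ — so $H_{n,1}$ is unirational as well. It then remains to check that the forgetful map $H_{n,1}\dashrightarrow \sC_{n,1}$, $[(X,R,p)]\mapsto [(X,p)]$, which drops the scroll, is dominant. This is immediate from the very definition of $\sC_n$: for a generic $[X]\in \sC_n$ the family $S(X)$ of scrolls of the prescribed type inside $X$ is nonempty (by Proposition \ref{scrollk3} it is precisely the associated K3 surface $K(X)$, and $\sC_n$ is by Definition \ref{defscrolls} the closure of the locus of cubics containing such a scroll). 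Hence a generic $(X,p)$ in $\sC_{n,1}$ lifts to a triple $(X,R,p)$ for any choice of $R\in S(X)$, so the map is dominant, with generic fiber $\cong K(X)$. Composing, $\sC_{n,1}$ is the dominant rational image of the unirational variety $\mathcal{H}_{n,1}$, hence unirational.

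I do not expect a genuine obstacle here: the content of the statement is entirely in Theorem \ref{ratxi} (unirationality of $\mathcal{S}_n$ plus the rank count for $H^0(\P^5,\sI_{R\cup p}(3))$), and the present argument is just a diagram chase. The only care needed is the routine bookkeeping common to all such moduli arguments: one should restrict throughout to the dense open loci over which the universal families exist and the $PGL(6)$-action is free — legitimate since we work birationally and guaranteed generically by Proposition \ref{rigidity} — and one should note that $(X,R,p)\mapsto [(X,p)]$ is $PGL(6)$-invariant, so it indeed factors through $H_{n,1}$ as drawn in (\ref{diagrammone}).
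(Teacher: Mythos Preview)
Your proposal is correct and follows essentially the same route as the paper: pass from the unirational $\mathcal{H}_{n,1}$ of Theorem~\ref{ratxi} through the $PGL(6)$-quotient $H_{n,1}$ and then forget the scroll to dominate $\sC_{n,1}$. Your justification of dominance of the forgetful map (via nonemptiness of $S(X)$ for generic $X$) and your remarks on restricting to the open locus where Proposition~\ref{rigidity} applies are exactly the bookkeeping the paper invokes, only spelled out a bit more explicitly.
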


\begin{proof}
Recall from Prop. \ref{rigidity} that the generic cubic fourfold in the divisors $\sC_n$ has no projective automorphism. Call $V\subset \sC_n$ the dense locus where cubics have no automorphism. It is straightforward to see - and we have already implicitely used this in Diag. \ref{diagrammone} - that, over $V$, the universal cubic has a natural quotient structure

\begin{equation}
\sC_{n,1} = \mathcal{H}_{n,1}^3// PGL(6).
\end{equation}

Hence $\sC_{n,1}$ is the natural moduli space for couples $(X,p)$, up to the action of $PGL(6)$. The upshot is that there exist a natural rational surjective forgetful map (up to $PGL(6)$-action)

\begin{eqnarray}
\varphi_R: H_{n,1} & \to & \sC_{n,1}; \\
(X,R,p) & \mapsto & (X,p);
\end{eqnarray}

that forgets the scrolls contained in $X$. By Thm. \ref{ratxi}, the variety $H_{n,1}$ is unirational, and it dominates $\sC_{n,1}$, thus also $\sC_{n,1}$ is unirational.
\end{proof}

\section{Unirationality through rational special surfaces.}\label{unirat}

\subsection{Special cubics in $\sC_d$ in the range $8\leq d \leq 38$.}\label{sectnuer}

The goal of this Section is to prove the unirationality of the non-empty families of universal cubics $\sC_{d,1}$ for $8\leq d\leq 38$ by generalizing some results of Nuer, and using a celebrated result of Kollar \cite{Ko}. Then, by applying inductively the same argument, we will show that $\sC_{d,n}$ - the universal cubic with $n$ marked points - is also unirational in the same range. Let us recall shortly the results we need.

\medskip

In his paper \cite{Ko}, extending a result of Segre that held only over $\mathbb{Q}$, Kollar shows the following

\begin{thm}
Let $k$ be a field and $X \subset \PP^{
n+1}$ a smooth cubic hypersurface of dimension $n \geq 2$ over $k$. Then the following are equivalent:

\begin{enumerate}
\item $X$ is unirational (over $k$);
\item $X$ has a $k$-point.
\end{enumerate}
\end{thm}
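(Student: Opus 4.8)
The implication $(1)\Rightarrow(2)$ is the easy one, and I would dispatch it first. If $X$ is unirational over $k$, there is a dominant rational map $\PP^{n}\dashrightarrow X$ defined over $k$; when $k$ is infinite the set $\PP^{n}(k)$ is Zariski dense and so meets the domain of definition of this map, whose image is a $k$-point of $X$. When $k$ is finite, $X$ is cut out by a cubic form in $n+2\geq 4$ variables, so the Chevalley--Warning theorem already furnishes a $k$-point unconditionally (and, as it turns out, unirationality holds as well, so over a finite field both conditions hold automatically). The content of the statement is therefore $(2)\Rightarrow(1)$, and my plan is to run Segre's classical tangent-section construction for cubic surfaces over $\Q$ in the generality of an arbitrary base field $k$ and arbitrary $n\geq 2$, the crucial feature being that the whole construction is fed by a single $k$-rational point.

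So fix a sufficiently general point $x_{0}\in X(k)$. I would first analyse the tangent hyperplane section $W:=X\cap T_{x_{0}}X\subset T_{x_{0}}X\cong\PP^{n}$: it is a cubic $(n-1)$-fold with a point of multiplicity two at $x_{0}$, and for general $x_{0}$ --- here one uses that $X$ is smooth and that $n\geq 2$ --- it is irreducible and reduced with an ordinary quadratic singularity at $x_{0}$. Projection from $x_{0}$ then realises a birational map $\PP^{n-1}\dashrightarrow W$ defined over $k$ (a general line through $x_{0}$ meets $W$ with multiplicity two at $x_{0}$ and in one further point). In particular $W$ is $k$-rational of dimension $n-1\geq 1$.

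The key point is that this construction \emph{moves}. For general $x\in W$ the section $X\cap T_{x}X$ is again a cubic $(n-1)$-fold with a double point at $x$, hence birational over $k$, via projection from $x$, to $\PP^{n-1}$, and this parametrisation depends rationally on $x$. I would package this in the incidence variety
\[
 I:=\{(x,y)\in W\times X \ :\ y\in T_{x}X\},
\]
whose points are the pairs for which the line $\overline{xy}$ is tangent to $X$ at $x$ with residual intersection point $y$. The first projection exhibits $I$ birationally as the total space of the family $\{X\cap T_{x}X\}_{x\in W}$ of uniformly $k$-rational $(n-1)$-folds, so there is a dominant rational map $W\times\PP^{n-1}\dashrightarrow I$ over $W$; since $W$ is $k$-rational this shows $I$ is $k$-unirational. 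Finally I would check that the second projection $I\to X$, $(x,y)\mapsto y$, is dominant: for general $y\in X$ its fibre is $W\cap\{x\in X: y\in T_{x}X\}=W\cap X\cap\{P_{y}=0\}$, where $P_{y}$ is the polar quadric of $X$ at $y$, an effective ample (hence non-empty) divisor on the projective variety $W$ of dimension $n-1\geq 1$. Thus $X$ is dominated over $k$ by the unirational variety $I$, so $X$ is unirational over $k$.

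The steps that genuinely need care --- and where I expect the real difficulty --- are the following. First, justifying the genericity statements in the infinite-field case (that a general tangent section is irreducible, reduced, and singular only at an ordinary node, so that projection from that node is birational), and, more seriously, reducing from an \emph{arbitrary} $k$-point to a sufficiently general one: the latter is not automatic from a single point, but can be bootstrapped by first running the construction on the tangent section $W$, which is $k$-rational and hence has a Zariski-dense set of $k$-points once it has its node. Second, the finite-field case escapes the naive argument entirely, since dense subsets of $X(k)$ need not be available; there one must rerun the same geometry while controlling the number of $k$-points, via Lang--Weil estimates together with an ad hoc treatment of the finitely many small fields. These last two points are precisely where Kollar's theorem goes beyond Segre's original result, and I would expect the finite-field bookkeeping to be the main obstacle.
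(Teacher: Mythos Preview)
The paper does not prove this theorem at all: it is quoted as a result of Koll\'ar \cite{Ko} (extending Segre's theorem over $\Q$) and is used as a black box in Section~\ref{unirat}. There is therefore no proof in the paper to compare your proposal against.

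That said, your sketch is a faithful outline of the Segre--Koll\'ar argument. The geometric core --- the rationality of the tangent hyperplane section via projection from its node, the incidence variety $I$ over $W$, and dominance of $I\to X$ --- is correct, and you have correctly isolated the two genuine difficulties: bootstrapping from an arbitrary $k$-point to a general one, and the finite-field case. Your finite-field treatment of $(1)\Rightarrow(2)$ via Chevalley--Warning is fine; as you note, the delicate direction over finite fields is $(2)\Rightarrow(1)$, where Zariski-density arguments fail and one must instead count points. If you want to turn this into a complete proof you would need to carry out those two steps in detail, but that work lives in \cite{Ko}, not in the present paper.
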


We are going to use this in a relative setting. In order to do this we recall some results due to Nuer.

\medskip

In \cite{Nu}, the author studies the birational geometry of the divisors $\sC_d$, for $12 \leq d \leq 38$, by considering an open subset $U_d \subset (\PP^2)^p$ parametrizing generic $p$-tuples of distinct points that give certain cohomological
invariants that appear in \cite[Table 2]{Nu}. Let $S$ be the rational surface obtained as the blow-up of $\PP^2$ along the $p$ points $x_1, \dots, x_p$, and let us embed it into $\PP^5$ via the linear series $|aL - (E_1 +\dots + E_i) - 2(E_{i+1}+ \dots + E_{i+j}) - 3(E_{i+j+1} +\dots +E_p)|$, where the parameters $(a,i,j,p)$ of the linear system are those displayed in \cite[Table 1]{Nu}. From \cite[Thm. 3.1]{Nu} we know that there exists a vector bundle $V_d \to U_d$ such that the fiber over $(x_1, \dots , x_p)$ is the space of global sections $H^0(\mathcal{I}_{S|P5}(3))$. 
The natural classifying morphism $\PP(V_d) \to \sC_d$, obtained by the universal property of $\sC_d$ is dominant and this implies the unirationality of $\sC_d$.

\medskip

Let us now denote by $\sX_d$ the universal cubic over $\PP(V_d)$, and by $\pi_{p+1}: (\PP^2)^{p+1} \to (\PP^2)^p$ the forgetful map  that forgets the last point. Now we can claim :

\begin{thm}\label{teokol}
The universal cubic fourfolds $\sC_{d,1}$ are unirational if $12\leq d \leq 38$.
\end{thm}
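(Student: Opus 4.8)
The plan is to adapt the argument of Thm.~\ref{ratxi} to the setting of Nuer's construction, replacing the Hilbert scheme of scrolls by the parameter space $U_d \subset (\PP^2)^p$ and using Kollar's theorem to produce a section. Concretely, consider the fiber product
\begin{equation}
\sX_d = \{ (x_1,\dots,x_p, [X], q) : (x_1,\dots,x_p) \in U_d,\ [X] \in \PP(V_d)_{(x_1,\dots,x_p)},\ q \in X \},
\end{equation}
i.e.\ the universal cubic over $\PP(V_d)$. Since $\PP(V_d)$ is a projective bundle over $U_d$, which is open in $(\PP^2)^p$ and hence rational, $\PP(V_d)$ is rational, and $\sX_d$ is a family of cubic fourfolds over a rational base. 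The classifying morphism $\PP(V_d) \to \sC_d$ is dominant by \cite[Thm.~3.1]{Nu}, so the induced map $\sX_d \dashrightarrow \sC_{d,1}$ (sending $(x_1,\dots,x_p,[X],q) \mapsto ([X],q)$, well-defined up to $PGL(6)$ on the dense locus where $X$ has no automorphisms, by Prop.~\ref{rigidity}) is dominant as well. Therefore it suffices to prove that $\sX_d$ is unirational.

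For this I would exhibit a canonical point on the generic fiber of $\sX_d \to \PP(V_d)$, so that Kollar's theorem applies over the function field $K = K(\PP(V_d))$. The natural candidate is one of the blown-up points: the surface $S$ embeds in $X$, and its image meets $X$ in particular in the image of the exceptional divisors / the points $x_1,\dots,x_p$ themselves, which are manifestly $K$-rational since the $x_i$ are coordinates on the base $(\PP^2)^p$. More precisely, pick any $K$-point of $S \subset X$ coming from the embedding (for instance the image of a fixed rational point of $\PP^2$ not among the $x_i$, or the image of $E_1$): this gives a rational section $\sigma: \PP(V_d) \dashrightarrow \sX_d$. By Kollar's theorem the generic fiber $X_K$, being a smooth cubic fourfold with a $K$-point, is unirational over $K$; spreading this out, there is a dominant rational map from $\mathbb{P}^4_K$, equivalently from $\PP(V_d) \times \PP^4$, onto $\sX_d$. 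Since $\PP(V_d) \times \PP^4$ is rational, $\sX_d$ is unirational, and hence so is $\sC_{d,1}$.

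The main obstacle is ensuring that the section $\sigma$ is genuinely defined over the whole base $\PP(V_d)$ (or at least a dense open subset) and lands in the smooth locus of the fibers — i.e.\ that the chosen point of $S$ really is a smooth point of the generic cubic $X$ and varies algebraically with $(x_1,\dots,x_p,[X])$. One must check that for generic parameters the surface $S$ is smooth and embedded, that $X$ is smooth along $S$ (both follow from Nuer's analysis, since the generic $X \in \sC_d$ is smooth and contains $S$ as a smooth surface), and that the marked point avoids the $p$-tuple $x_1,\dots,x_p$ and any base locus, so that it is an honest point of $\PP^2$ blown up, hence of $S$, hence of $X$. A secondary, purely bookkeeping point is the passage between the Hilbert-scheme level and the moduli (i.e.\ $PGL(6)$-quotient) level: as in Section~\ref{fv}, over the locus $V \subset \sC_d$ of cubics without automorphisms one has $\sC_{d,1} = \sX_d^{\mathrm{no\ marked\ pt}} /\!/ PGL(6)$ in the appropriate sense, so a dominant rational map onto the Hilbert-scheme version descends to a dominant rational map onto $\sC_{d,1}$; unirationality is insensitive to this quotient. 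Once these checks are in place the result follows formally, and the same diagram as \eqref{diagrammone} organizes the maps.
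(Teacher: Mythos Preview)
Your argument is correct and is essentially the paper's proof. The only cosmetic difference is in how the rational section is produced: the paper pulls back along the forgetful map $\pi:(\PP^2)^{p+1}\to(\PP^2)^p$ and uses the image in $S$ of the $(p+1)^{\mathrm{th}}$ point as the tautological section of $\pi^*\sX_d\to\PP(\pi^*V_d)$, whereas you fix once and for all a point $q_0\in\PP^2$ and take its image in $S\subset X$ to get a section of $\sX_d\to\PP(V_d)$ directly. These are interchangeable (your section is the restriction of the paper's tautological one to the slice $\{q_0\}\times(\PP^2)^p$), and your version is marginally more economical for the statement at hand. The paper's phrasing with the extra $\PP^2$ factor pays off later, since it is exactly the shape needed to iterate and prove unirationality of $\sC_{d,n}$ for all $n$. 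One small slip: ``the image of $E_1$'' is a curve, not a point, so stick with your first suggestion of a fixed point of $\PP^2$ away from the $x_i$.
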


\begin{proof}
By pulling back through $\pi$, we have the following commutative diagram. 

\begin{eqnarray}
\xymatrix{
\pi^* \sX_d \ar[r]\ar[d] & \sX_d \ar[d]\ar[r] & \sC_{d,1} \ar[d] \\
\PP(\pi^*V_d) \ar@/^/[u]^s \ar[d]\ar[r] & \PP( V_d) \ar[d]\ar[r] & \sC_{d} \\
(\PP^2)^{p+1} \ar[r]^\pi & U_d}
\end{eqnarray}

Actually the LHS column is only defined on $\pi^{-1}(U_d)$ but for simplicity we will not write this. Working in the birational category, this does not affect our results. 

We observe that the pulled-back family $\pi^* \sX_d$, seen as universal family over $\PP(\pi^*V_d)$ has a tautological rational section $s$, that is the image in $S$ of the $(p+1)^{th}$ point from $(\PP^2)^{p+1}$. This means that $\pi^* \sX_d$ has a rational point over the function field of $\PP(\pi^*V_d)$. By Kollar's Theorem $\pi^* \sX_d $ is then unirational over this field. This in turn is equivalent to the existence of a $\PP^4$-bundle on $\PP(\pi^*V_d)$ that dominates $\pi^* \sX_d$. The $\PP^4$-bundle is rational, $\pi^* \sX_d$ dominates $\sX_d$ and this in turn dominates $\sC_{d,1}$, which is then unirational.
\end{proof}

\begin{rk}
An easy adaptation of this argument allows us to show the unirationality of $\sC_{8,1}$ as well. Of course, all the planes in $\PP^5$ are projectively equivalent, and the linear system of cubics through a given 2-plane $P$ is
$|\mathcal{I}_{P/\PP^5}(3)|\cong \PP^{45}$. The plane $P$ has rational points over $\mathbb{C}$ and the universal cubic $\sX_8 \subset \PP^{45} \times \PP^5$ contains the "constant" plane $P\times \PP^{45}$. The variety $\sX_8$ also dominates $\sC_{8,1}$, by definition. Hence $\sX_8$ has rational points over the function field of $\PP^{45}$ and hence is unirational over this field. By the same argument as in Thm. \ref{teokol}, this implies the unirationality of $\sC_{8,1}$.
\end{rk}

\subsection{Special cubics in $\sC_{42}$.}\label{staglia}

The unirationality of universal cubics $\sC_{42,1}$ goes along the same lines as in the previous section, but we need to extract a couple of quite subtle results from \cite{RS1} about rational surfaces contained in cubics belonging to these divisor.

\medskip

In \cite{RS1}, the authors construct a 48-dimensional unirational family $\mathcal{S}_{42}$ of 5-nodal surfaces of degree 9 and sectional genus 2, such that the divisor $\sC_{42}$ can be described as the locus of cubics containing surfaces from this family. We reconstruct briefly their argument.

\medskip

More precisely (see \cite[Rmk. 4.3]{RS1}) they construct a family $\mathcal{W}$ of del Pezzo quintics with some special intersection theoretical properties. The construction of such surfaces depend on certain choices (5 points on $\PP^2$, a line secant to a particular rational surface, etc.) which all depend on free, rational parameters. This implies that the family $\mathcal{W}$ is unirational. By construction, each del Pezzo quintic $S\in \mathcal{W}$ is contained in one smooth del Pezzo fivefold $F\subset \PP^8$, that is an hyperplane section of the Grassmannian $Gr(2,5)$. The fivefold $F$ contains a rational, 3-dimensional, family of planes,  whose members all have class $(2,2)$ inside the Chow ring $CH^\bullet (Gr(2,5))$ of the Grassmannian. The projection with center any one of these planes defines a birational morphism between any del Pezzo quintic from $\mathcal{W}$ and some 5-nodal $S\in \mathcal{S}_{42}$. As observed in \cite{RS1}, this in turn implies that $\mathcal{S}_{42}$ is unirational. Let us denote by $\mathbb{S}$ the rational parameter space that dominates $\mathcal{S}_{42}$.
Other relevant work about $\sC_{42}$ has been done in \cite{Lai} and \cite{HS}.

\begin{rk}\label{birra}
We observe that the construction above implies that any $S\in \mathcal{S}_{42}$ is birational to a del Pezzo quintic over $\mathbb{C}(\mathbb{S})$, the function field of $\mathbb{S}$. This means that on the same field there exist a birational map $\PP^2 \dashrightarrow S$, as we explain in the following Lemma.
\end{rk}







\begin{lm}\label{univdp5}
Let $S$ be any surface in $\mathcal{S}_{42}$, then $S$ is rational over the function field of $\mathbb{S}$.
\end{lm}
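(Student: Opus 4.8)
## Proof Proposal for Lemma \ref{univdp5}

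The plan is to unwind the geometric construction recalled in Remark \ref{birra} and keep track of fields of definition at each step. By construction, every $S \in \mathcal{S}_{42}$ arises from a del Pezzo quintic $S' \in \mathcal{W}$ by projecting away from one of the planes of class $(2,2)$ contained in the del Pezzo fivefold $F \subset \PP^8$ that hosts $S'$; and the parameters defining $S'$, $F$, and the chosen plane are all rational parameters, assembled into the rational variety $\mathbb{S}$ dominating $\mathcal{S}_{42}$. The key observation is that this projection is a birational morphism $S' \dashrightarrow S$ that is defined over $\mathbb{C}(\mathbb{S})$, since the center of projection varies rationally with the parameters. Hence it suffices to prove that the del Pezzo quintic $S'$ is rational over $\mathbb{C}(\mathbb{S})$.

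First I would recall that a del Pezzo surface of degree five is rational over any field over which it has a rational point — indeed over any field at all in the geometrically split case, because the degree-five del Pezzo surface over a field $k$ always has a $k$-rational point (a classical fact: the anticanonical model has points over $k$ by a counting/Galois-cohomology argument, e.g. via the action of the Weyl group $W(A_4)$ on the ten lines), and then blowing down to $\PP^2$ is defined over $k$. In our situation the construction in \cite{RS1} is even more explicit: the del Pezzo quintic $S' \in \mathcal{W}$ is itself built as the blow-up of $\PP^2$ at five points, and those five points are among the ``free, rational parameters'' packaged into $\mathbb{S}$. Therefore the blow-down morphism $S' \to \PP^2$, contracting the five $(-1)$-curves, is defined over $\mathbb{C}(\mathbb{S})$, which exhibits $S'$ as rational over that field.

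Composing the two birational maps, we obtain a birational map $\PP^2 \dashrightarrow S' \dashrightarrow S$ defined over $\mathbb{C}(\mathbb{S})$, which is exactly the assertion of the lemma. I would phrase the argument so that it produces this composite map explicitly, since it is this map — a rational parametrization of $S$ by $\PP^2$ over the function field $\mathbb{C}(\mathbb{S})$ — that will be fed into the Kollar-type argument of the next step (providing a rational section, hence a rational point over the relevant function field, for the universal cubic containing $S$).

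The main obstacle, and the only point requiring genuine care rather than bookkeeping, is verifying that the center of the projection $F \dashrightarrow \PP^5$ — one of the planes of class $(2,2)$ in $F$ — can be chosen rationally in the parameters, i.e. that the $3$-dimensional family of such planes admits a rational section over $\mathbb{S}$ (or can be incorporated into $\mathbb{S}$ from the outset). This is implicit in the statement of \cite{RS1} that $\mathcal{S}_{42}$ is unirational via $\mathbb{S}$, but it should be made explicit: one must ensure that no base-change or multisection is needed to pick out a plane, equivalently that the incidence variety of (parameter, plane) pairs is itself rational over $\mathbb{S}$. Granting that — which follows from the concrete description of these planes in \cite{RS1} — the rest of the proof is the elementary field-of-definition tracking described above.
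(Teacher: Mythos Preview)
Your proof is correct and follows essentially the same route as the paper: reduce to the fact that $S$ is birational to a del Pezzo quintic over $\mathbb{C}(\mathbb{S})$ (Remark~\ref{birra}), and then use that a del Pezzo surface of degree five is rational over any field. The paper simply cites Enriques for the latter step, whereas you supply both that argument and the more explicit observation that in \cite{RS1} the del Pezzo quintic is already presented as a blow-up of $\PP^2$ at five points varying rationally in $\mathbb{S}$; either justification suffices, and your additional discussion about the rational choice of the projection plane is a reasonable point of care that the paper absorbs into Remark~\ref{birra}.
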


\begin{proof}
As we have observed in Rmk. \ref{birra}, $S$ is birational to a del Pezzo quintic, over $\mathbb{C}(\mathbb{S})$.
By a well-known result of Enriques \cite{En}, a del Pezzo quintic is rational over any field.
\end{proof}


\begin{prop}\label{duecasi}
The universal cubic $\sC_{42,1}$ is unirational.
\end{prop}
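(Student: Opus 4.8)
The plan is to mimic the argument of Theorem \ref{teokol}, replacing the role of Nuer's vector bundle $\PP(V_d)$ over $U_d\subset (\PP^2)^p$ by the family $\mathbb{S}$ constructed in \cite{RS1} together with the relative linear system of cubics through the corresponding $9$-nodal scrolls. First I would record that, by the discussion above and \cite[Rmk. 4.3]{RS1}, there is a vector bundle (or at least a Zariski-locally-trivial projective-space fibration, which is all we need birationally) $V_{42}\to \mathbb{S}$ whose fiber over a point parametrizing $S\in\mathcal{S}_{42}$ is $H^0(\PP^5,\sI_{S/\PP^5}(3))$, and that the resulting classifying map $\PP(V_{42})\to\sC_{42}$ is dominant; this is exactly the unirationality statement for $\sC_{42}$ from \cite{RS1}. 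Let $\sX_{42}\subset\PP(V_{42})\times\PP^5$ denote the universal cubic over $\PP(V_{42})$, which by construction dominates $\sC_{42,1}$.

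The key new ingredient is Lemma \ref{univdp5}: every $S\in\mathcal{S}_{42}$ is rational over $\mathbb{C}(\mathbb{S})$, i.e. there is a dominant rational map $\PP^2_{\mathbb{C}(\mathbb{S})}\dashrightarrow S$ defined over the function field of $\mathbb{S}$. Pulling this back to $\PP(V_{42})$ — or, equivalently, working over the function field $K=\mathbb{C}(\PP(V_{42}))$, which contains $\mathbb{C}(\mathbb{S})$ — we obtain a rational section of the tautological scroll $S\hookrightarrow\sX_{42}$ over $\PP(V_{42})$, and hence a $K$-point of the generic cubic fourfold $\sX_{42,K}$. By Kollar's Theorem this smooth cubic fourfold is unirational over $K$, which means there is a dominant rational map from a $\PP^4$-bundle over $\PP(V_{42})$ onto $\sX_{42}$. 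Since $\PP(V_{42})$ is rational (it is a projective bundle over the rational variety $\mathbb{S}$), so is this $\PP^4$-bundle; composing with the dominant maps $\sX_{42}\dashrightarrow\sC_{42,1}$ gives a dominant rational map from a rational variety to $\sC_{42,1}$, proving unirationality. I would present this via a commutative diagram analogous to the one in Theorem \ref{teokol}, with $(\PP^2)^{p+1}\to U_d$ replaced by the data of the rational parametrization $\PP^2\dashrightarrow S$ over $\mathbb{S}$.

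The main obstacle — and the reason the $\sC_{42}$ case is split off into its own subsection — is precisely producing the rational point, i.e. the content of Remark \ref{birra} and Lemma \ref{univdp5}. Unlike in Nuer's construction, where the extra marked point on $\PP^2$ gives a tautological section of the scroll essentially for free, here the surfaces $S\in\mathcal{S}_{42}$ are $9$-nodal degree $9$ scrolls and one must go through the birational chain $S\dashleftarrow$ del Pezzo quintic $\dashleftarrow\PP^2$, checking that every link in this chain is defined over $\mathbb{C}(\mathbb{S})$ and not merely over the algebraic closure: the del Pezzo quintic comes with a $\mathbb{C}(\mathbb{S})$-rational $3$-dimensional family of planes of class $(2,2)$ on the ambient $\mathrm{Gr}(2,5)$-section $F$, the projection from such a plane is defined over $\mathbb{C}(\mathbb{S})$, and Enriques' theorem guarantees rationality of a del Pezzo quintic over any field. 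Granting these field-of-definition points, the rest of the argument is a verbatim repetition of the proof of Theorem \ref{teokol} — pull back, find the section, apply Kollar, conclude — and I expect it to be short.
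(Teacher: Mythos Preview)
Your proposal is correct and follows essentially the same route as the paper: define the relative linear system $\PP(V_{42})$ of cubics through the Russo--Staglian\`o surfaces, use Lemma~\ref{univdp5} to produce a rational section of the universal cubic $\sX_{42}$ after pulling back to $\mathbb{S}$ (the paper makes this explicit by passing to $\PP^2\times\mathbb{S}$ via the map $\rho$), and then apply Kollar's theorem exactly as in Theorem~\ref{teokol}. One small slip: the surfaces in $\mathcal{S}_{42}$ used here are the $5$-nodal degree~$9$ surfaces of sectional genus~$2$ from \cite{RS1}, not the $8$-nodal degree~$9$ scrolls of Definition~\ref{defscrolls} (those belong to the Farkas--Verra approach of Section~\ref{fv}); this does not affect your argument.
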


\begin{proof}
Let us denote by $\PP(V_{42})$ the relative linear system over $\mathcal{S}_{42}$ of cubics through $S\in \mathcal{S}_{42}$, and by $\sX_{42}$ the universal family over $\PP(V_{42})$, that dominates $\sC_{42,1}$. Then the situation is the following.

\begin{eqnarray}
\xymatrix{
\rho^*\pi^* \sX_{42} \ar[r] \ar[d] & \pi^* \sX_{42} \ar[r]\ar[d] & \sX_{42} \ar[d]\ar[r] & \sC_{42,1} \ar[d] \\
\PP(\rho^*\pi^*V_{42})\ar@/^/[u]^s\ar[r]\ar[d] & \PP(\pi^*V_{42}) \ar[d]\ar[r] & \PP(V_{42}) \ar[d] \ar[r]^\varphi & \sC_{42}\\
\PP^2\times \mathbb{S} \ar[r]^\rho& \mathbb{S}\ar[r]^\pi & \mathcal{S}_{42} &}
\end{eqnarray}

Here $\pi: \mathbb{S}\to \mathcal{S}_{42}$ is the unirational parametrization described above, $\rho$ the second projection and $\varphi$ is the natural classifying map. Now, all cubics that are the fibers of the fibration $\sX_{42}\to \PP(V_{42})$ contain by construction a surface $S\in \mathcal{S}_{42}$, and since these surfaces are rational over $\mathbb{C}(\mathbb{S})$ (they are birational to del Pezzo quintics), they have rational points over the same field. Summing up, there exists a section $s:\PP(\rho^*\pi^*V_{42}) \to \rho^*\pi^*\sX_{42}$, that - thanks to Kollar's Theorem - makes $\rho^*\pi^*\sX_{42}$ unirational over $\PP^2\times \mathbb{S}$ and hence implies the unirationality of $\sC_{42,1}$.
\end{proof}


\begin{rk}
As the reader may observe, the universal cubic $\sC_{44,1}$ is missing from our description. Cubics in this divisor contain a Fano model of an Enriques surface \cite[Thm. 3.2]{Nu}. The divisor $\sC_{44}$ is unirational, so in order to apply Kollar's Theorem and show that $\sC_{44,1}$ is unirational, one should show that the generic Enriques surface has  a rational point.
\end{rk}

\subsection{Unirationality of $\sC_{d,n}$.}

In this section we are going to use the inductivity of the construction of $\sC_{d,n}$ - the moduli space of cubic fourfolds in $\sC_d$ with $n$ marked points - and Kollar's theorem in order to show the unirationality of $\sC_{d,n}$ for all $n$.

\smallskip

Recall in fact that $\sC_{d,n}$ is just the universal cubic fourfold over $\sC_{d,n-1}$. This allows us to use our machinery to prove inductively the following.

\begin{thm}
The moduli spaces $\sC_{d,n}$ are unirational for all $n$, if $8\leq d \leq 42$.
\end{thm}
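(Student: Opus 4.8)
The plan is to prove the statement by induction on $n$, using the case $n=1$ (which is Theorems~\ref{bigthm} and the results of this section, i.e. Thm.~\ref{teokol} and Prop.~\ref{duecasi}) as the base case, and exploiting at each step the tautological section coming from the $(p+1)$-th marked point together with Kollar's Theorem, exactly as in the proof of Thm.~\ref{teokol}. The key observation that makes the induction work is the one already recorded: $\sC_{d,n}$ is nothing but the universal cubic fourfold over $\sC_{d,n-1}$, so at each stage we are adding one cubic-hypersurface fibration on top of a variety we already know to be unirational, and we need to produce a rational point of the generic fiber.

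Here is how I would carry it out. Fix $d$ in the range $8\leq d\leq 42$ and suppose, as inductive hypothesis, that $\sC_{d,n-1}$ is unirational; more precisely, unwinding the proofs in Sections~\ref{sectnuer} and~\ref{staglia}, suppose we have a rational dominant map $P_{n-1}\dashrightarrow \sC_{d,n-1}$ where $P_{n-1}$ is a projective bundle over (a rational parameter space for) a configuration space of the shape $(\PP^2)^{p+n-1}$ or $\PP^2\times\mathbb{S}\times(\PP^2)^{n-2}$, etc., and such that the $p+n-1$ distinct marked points on $\PP^2$ all lie on the blown-up surface $S\subset\PP^5$ cut out by the relevant linear system, hence give rational points of the cubics in the family. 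Pull back the universal cubic fourfold $\sC_{d,n}\to\sC_{d,n-1}$ along this map and then pull back once more along the forgetful map $\pi_{p+n}\colon (\PP^2)^{p+n}\to (\PP^2)^{p+n-1}$ that adds one more point of $\PP^2$. The image of this new point in $S\subset\PP^5$ is a tautological rational section, so the pulled-back cubic fourfold has a rational point over the function field of the new parameter space. By Kollar's Theorem it is unirational over that field, which (as in Thm.~\ref{teokol}) means there is a rational $\PP^4$-bundle over the (rational) parameter space dominating the pulled-back universal cubic, which dominates $\sC_{d,n}$. For the divisor $\sC_{42}$ one uses Lemma~\ref{univdp5} in place of the marked-point-on-$\PP^2$ construction: the surface $S\in\mathcal{S}_{42}$ is rational over $\mathbb{C}(\mathbb{S})$, so one first produces a rational section by adding a point of $\PP^2$ mapping to $S$, and then proceeds identically.

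The one genuine subtlety — and the step I expect to be the main obstacle — is making sure that the added point can always be taken to give a \emph{rational} point of the generic cubic in the family at \emph{every} stage of the induction, rather than just a point of the ambient $\PP^5$. At the $n$-th stage the cubic already contains the surface $S$ together with $n-1$ previously chosen points on it; one must check that choosing the $n$-th point generically on (the $\PP^2$ blowing down to) $S$ still keeps it a \emph{distinct} point and still imposes the expected, constant number of conditions on cubics, so that the relative linear system $\PP(V_{d})$ stays a genuine (rational) projective bundle over the enlarged configuration space and the classifying map to $\sC_{d}$ stays dominant. This is the open-density bookkeeping that was already handled for $n=1$ in Nuer's setup and in \cite{RS1}; since adding a further general point only shrinks the relevant open locus and the rank computation $h^0(\mathcal{I}_{S\cup\{x_1,\dots,x_n\}/\PP^5}(3))$ is unaffected by throwing in finitely many further general points (the surface $S$ already contains them), the induction goes through. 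I would also remark, as the authors do elsewhere, that everything is happening in the birational category, so we freely restrict to the open loci where the universal families and the section $s$ are defined without further comment.

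\begin{proof}
We argue by induction on $n$, the case $n=1$ being the content of Theorems~\ref{teokol}, \ref{duecasi} and the Remark on $\sC_{8,1}$. Recall that $\sC_{d,n}$ is the universal cubic fourfold over $\sC_{d,n-1}$.

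Fix $d$ with $8\leq d\leq 42$, $d\neq 44$, and assume $\sC_{d,n-1}$ is unirational, obtained — as in Sections~\ref{sectnuer}–\ref{staglia} — as the image of a projective bundle $\PP(\pi^*V_d)$ over a rational parameter space $T_{n-1}$ which is (birational to) a configuration space of distinct points on $\PP^2$ (for $d\leq 38$, of the form $U_d\times(\PP^2)^{n-1}$) or of the form $\PP^2\times\mathbb{S}\times(\PP^2)^{n-2}$ (for $d=42$), so that the marked points all lie on the surface $S\subset\PP^5$ cut out by the relevant linear system and hence define rational points of the cubics in the family. Let $\pi_{n}\colon T_{n}\to T_{n-1}$ be the forgetful map adding one more general point of $\PP^2$, and pull back the universal cubic fourfold $\sC_{d,n}\to\sC_{d,n-1}$ along the composite $\PP(\pi_n^*\pi^*V_d)\to\PP(\pi^*V_d)\dashrightarrow\sC_{d,n-1}$. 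Adding one further general point of $\PP^2$ leaves the rank $h^0(\mathcal{I}_{S\cup\{x_1,\dots,x_n\}/\PP^5}(3))$ unchanged (the new point already lies on $S$), so $\PP(\pi_n^*\pi^*V_d)$ is again a rational projective bundle over $T_n$ and the classifying map to $\sC_d$ stays dominant.

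The image of the new point in $S\subset\PP^5$ gives a tautological rational section $s$ of the pulled-back universal cubic over $\PP(\pi_n^*\pi^*V_d)$, i.e.\ a rational point over the function field of that base. By Kollar's Theorem the pulled-back universal cubic is unirational over this field; equivalently there is a rational $\PP^4$-bundle over $\PP(\pi_n^*\pi^*V_d)$ dominating it. This $\PP^4$-bundle is rational, it dominates the pulled-back family, which dominates $\sC_{d,n}$; hence $\sC_{d,n}$ is unirational. For $d=42$ one argues identically, using Lemma~\ref{univdp5} to guarantee that the surface $S\in\mathcal{S}_{42}$, and hence each cubic in the family, acquires a rational point over $\mathbb{C}(\mathbb{S})$ after adding the point of $\PP^2$ mapping to $S$. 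As always we work in the birational category, restricting to the open loci where the families and $s$ are defined.
\end{proof}
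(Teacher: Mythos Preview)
Your proof is correct and follows essentially the same route as the paper's: induction on $n$, pulling back the universal cubic along a rational parameter space enlarged by one extra $\PP^2$-factor, using the image of that extra point in $S$ as a tautological section, and invoking Kollar's Theorem to conclude. The paper organizes the same argument via a chain of diagrams (one for each inductive step) but the content is identical; your remark that the new point already lies on $S$ so the rank of the linear system is unchanged is exactly the bookkeeping the paper leaves implicit.
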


\begin{proof}
We will work inductively on $n$. For $n=0$ (or respectively 1) the claim is true thanks to \cite{Nu} and \cite{RS1} (or resp. Sections \ref{sectnuer} and \ref{staglia}). We will denote by $\mathcal{S}_d$ the unirational family of rational surfaces contained in cubics in $\sC_d$, $\PP(V_d) \to \mathcal{S}_d$ the relative ideal of of cubic hypersurfaces through each surface and $\sX_{d,1}\to \PP(V_d)$ the universal cubic over the relative linear system. As seen in Thm. \ref{teokol} and Prop. \ref{duecasi}, the variety $\sX_{d,1}$ is unirational and dominates $\sC_{d,1}$ through the natural classifying map of the coarse moduli space $\sC_{d,1}$. The situation is described by the following diagram:

\begin{eqnarray}\label{diag1}
\xymatrix{
\pi^* \sX_{d,1} \ar@{->>}[r]\ar[d]\ar@/^/[rr]^{\sigma} & \sX_{d,1} \ar[d]  \ar@{->>}[r] & \sC_{d,1} \ar[d] \\
\PP(\pi^*V_d) \ar@/^/[u]^s \ar[d]\ar@{->>}[r] & \PP( V_d) \ar[d] \ar@{->>}[r] & \sC_d \\
\PP^2\times\hat{\mathcal{S}}_d\ar[r]^\pi & \mathcal{S}_d},
\end{eqnarray}

where $\PP^2\times \hat{\mathcal{S}}_{d}$ is the appropriate rational space that assures the existence of the section $s$. As seen in Thms. \ref{teokol} and \ref{duecasi}, this boils down to taking $\PP^2\times(\PP^2)^{p}$ for $8\leq d <42$, and  the rational parameter space $\PP^2\times \mathbb{S}$ for $d=42$. 

We remark that $s$ is the section that makes $\pi^* \sX_{d,1}$ unirational following Kollar's Theorem. Call $\sigma$ the classifying map to $\sC_{d,1}$. Then, we can add one point and consider the universal cubic $\sC_{d,2} \to \sC_{d,1}$, that fits in the following diagram

\begin{eqnarray}\label{diag2}
\xymatrix{
\gamma^*\sigma^*\sC_{d,2} \ar@{->>}[r]\ar[d] \ar@/^/[rr]^{\tau} & \sigma^*\sC_{d,2} \ar[d]  \ar@{->>}[r] & \sC_{d,2} \ar[d] \\
\PP^2 \times \pi^* \sX_{d,1}  \ar@/^/[u]^s\ar[d]\ar@{->>}[r]^{\gamma} & \pi^* \sX_{d,1}  \ar@{->>}[d] \ar@{->>}[r]^\sigma & \sC_{d,1} \\
\PP^2 \times \hat{\mathcal{S}}_d  \ar[r] & \hat{\mathcal{S}}_d} & ,
\end{eqnarray}

where $\gamma$ is the second projection and $\sigma$ the dominant map from Diagram \ref{diag1}. Recall that $\pi^* \sX_{d,1}$ is unirational, and dominates $\hat{\mathcal{S}}_d$.

We observe that, exactly as we have done in Diag \ref{diag1} for $\sC_{d,1}$, since $\pi^*\sX_{d,1}$ dominates $\hat{\mathcal{S}}_{d,1}$, we are able to define a natural rational section $s: \PP^{2} \times \pi^*\sX_d \to \gamma^*\sigma^* \sC_{d,2}$. 
Hence the universal cubic has rational points over $\PP^2 \times \pi^*\sX_{d,1}$ and this space is unirational. By Kollar's theorem and since  $\gamma^*\sigma^* \sC_{d,2}$ dominates $\sC_{d,2}$, we get that $\sC_{d,2}$ is unirational.

\medskip

Now it is straightforward to see how to continue the argument by induction; we just draw the diagram for the following step for clarity. Recall that $\gamma^*\sigma^*\sC_{d,2}=\tau^*\sC_{d,2}$.

\begin{eqnarray}\label{diag3}
\xymatrix{
\lambda^*\tau^*\sC_{d,3} \ar@{->>}[r]\ar[d] \ar@/^/[rr]^{\omega} & \tau^*\sC_{d,3} \ar[d]  \ar@{->>}[r] & \sC_{d,3} \ar[d] \\
\PP^2 \times \tau^*\sC_{d,2} \ar@/^/[u]^s \ar[d]\ar@{->>}[r]^\lambda & \tau^*\sC_{d,2} \ar@{->>}[d] \ar@{->>}[r]^\tau & \sC_{d,2} \\
\PP^2 \times \hat{\mathcal{S}}_d \ar[r] & \hat{\mathcal{S}}_d }
\end{eqnarray}




\end{proof}

\section{Some results of non-unirationality.}

In this last section we collect some recent results about the Kodaira dimension of moduli spaces of K3 surfaces and of Hassett divisors $\sC_d$. These allow us to show that for an infinte range of values of $d$, the universal cubic fourfold over $\sC_d$ can not be unirational. Let us first claim the following straightforward Lemma, in which we need to assume that $d$ is even.

\begin{lm}\label{easylemma}
Suppose that the Kodaira dimension of $\sF_{\frac{d+2}{2}}$ or of $\sC_d$ is positive, then the universal cubic fourfold $\sC_{d,1}$ is not unirational.
\end{lm}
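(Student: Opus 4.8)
The plan is to exploit the rational maps relating $\sF_{\frac{d+2}{2}}$, $\sC_d$ and their universal families, together with the elementary fact that a variety dominated by a unirational variety is unirational, and conversely that a variety dominating a variety of positive Kodaira dimension cannot itself be unirational. The key point is that positive Kodaira dimension is a birational invariant which obstructs the existence of a dominant rational map from $\P^N$, and passing to a finite cover or to a universal family that is generically a fibration in rational varieties does not destroy this obstruction on the base.

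First I would recall the diagram \eqref{birat}, giving a rational map $\sF_{\frac{d+2}{2}} \dashrightarrow \sC_d$ which is birational for $d\equiv 2\ (\mathrm{mod}\ 6)$ and of degree two for $d\equiv 0\ (\mathrm{mod}\ 6)$. In the first case $\sF_{\frac{d+2}{2}}$ and $\sC_d$ have the same Kodaira dimension, so the hypothesis on either one gives positivity for $\sC_d$. In the degree-two case one still has a dominant generically finite rational map $\sF_{\frac{d+2}{2}} \dashrightarrow \sC_d$, and since Kodaira dimension cannot increase under a dominant rational map, positivity of $\kappa(\sC_d)$ would force positivity of $\kappa(\sF_{\frac{d+2}{2}})$; we only ever use the implication in the direction we need, namely that under the hypothesis of the Lemma we obtain in all cases $\kappa(\sC_d)>0$ (in the degree-two case directly from the assumption on $\sC_d$, and in the birational case from either assumption). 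So after this reduction we may assume $\kappa(\sC_d)>0$.

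Next I would observe that the forgetful map $\sC_{d,1}\to \sC_d$ is surjective, hence if $\sC_{d,1}$ were unirational there would be a dominant rational map from a projective space $\P^N$ to $\sC_{d,1}$, and composing with $\sC_{d,1}\to\sC_d$ yields a dominant rational map $\P^N\dashrightarrow\sC_d$. Since $\kappa(\P^N)=-\infty$ and Kodaira dimension does not increase along dominant rational maps (equivalently, a variety dominated by a unirational variety has $\kappa=-\infty$), this contradicts $\kappa(\sC_d)>0$. Therefore $\sC_{d,1}$ is not unirational. The same argument works verbatim starting from $\kappa(\sF_{\frac{d+2}{2}})>0$ once one has the reduction in the previous paragraph.

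The only genuinely delicate point is the handling of the degree-two map when the hypothesis is phrased on $\sF_{\frac{d+2}{2}}$: a priori positive Kodaira dimension upstairs does not immediately descend to the quotient. However, in the statement we only need the conclusion for $\sC_{d,1}$ under the stated hypothesis, and for the non-unirationality of $\sC_{d,1}$ it suffices to exhibit \emph{some} surjection from $\sC_{d,1}$ to a variety of positive Kodaira dimension. When $\kappa(\sF_{\frac{d+2}{2}})>0$ one can use instead the universal K3 side: the relevant variety covering $\sC_{d,1}$ (namely $H_{n,1}$ together with its map to $\sF_{\frac{d+2}{2}}$ through $H_n\cong \sF_{\frac{d+2}{2},1}$, as in Diagram \eqref{diagrammone}) also surjects onto $\sF_{\frac{d+2}{2}}$, so a unirational $\sC_{d,1}$ would again produce a dominant map from projective space to $\sF_{\frac{d+2}{2}}$, contradicting $\kappa(\sF_{\frac{d+2}{2}})>0$. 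Thus the hypothesis that $d$ is even (so that the associated K3 of degree $d$ and the maps above are available) is exactly what makes the argument go through, and this is the point I would be most careful to spell out.
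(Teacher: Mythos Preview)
Your argument for the case $\kappa(\sC_d)>0$ is exactly the ``straightforward'' point the paper has in mind (the lemma is stated without proof): the forgetful map $\sC_{d,1}\to\sC_d$ is dominant, so unirationality of $\sC_{d,1}$ would force $\kappa(\sC_d)=-\infty$. Your reduction in the birational case $d\equiv 2\ (\mathrm{mod}\ 6)$ is also fine.

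There is, however, a genuine gap in your handling of the degree-two case under the hypothesis $\kappa(\sF_{\frac{d+2}{2}})>0$. You invoke $H_{n,1}$ and the identification $H_n\cong\sF_{\frac{d+2}{2},1}$ from Diagram~(\ref{diagrammone}) and conclude that a unirational $\sC_{d,1}$ would produce a dominant map from projective space to $\sF_{\frac{d+2}{2}}$. But the arrows point the wrong way: $H_{n,1}$ \emph{dominates} $\sC_{d,1}$, not conversely, so unirationality of $\sC_{d,1}$ tells you nothing about $H_{n,1}$ or about $\sF_{\frac{d+2}{2}}$. Moreover, the birational identification $H_n\cong\sF_{\frac{d+2}{2},1}$ is only set up in the paper for $d=26,42$ (it relies on $F(X)\cong S^{[2]}$, which holds for $d=2(m^2+m+1)$), so it cannot be quoted for arbitrary $d$. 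More fundamentally, along a generically finite dominant map $\sF_{\frac{d+2}{2}}\dashrightarrow\sC_d$ Kodaira dimension can only go up, not down: $\kappa(\sF_{\frac{d+2}{2}})\geq\kappa(\sC_d)$, which is the opposite inequality to the one you need.

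This gap is harmless for the paper's applications: Proposition~\ref{gtk3} is stated only for $d\equiv 2\ (\mathrm{mod}\ 6)$, where the map (\ref{birat}) is birational, and Proposition~\ref{gtCd} speaks directly of $\kappa(\sC_d)$. So in every instance the hypothesis is effectively on $\sC_d$ and your first paragraph suffices. If you want to justify the remaining case honestly you need an additional input, for instance that the covering involution of $\sF_{\frac{d+2}{2}}\to\sC_d$ acts trivially on pluricanonical forms, or simply a direct statement about $\kappa(\sC_d)$; the route through $H_{n,1}$ does not do the job.
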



We will apply Lemma \ref{easylemma} to the following two Propositions, due to Gritsenko-Hulek-Sankaran and V\'arilly-Alvarado-Tanimoto. Prop. \ref{gtk3} was initially conceived for moduli of K3 surfaces; following \cite{Nu} we write its ``translation'' in terms of cubic fourfolds via the rational map of (\ref{birat}).

\begin{prop}\label{gtk3}
Let $d > 80,\ d \equiv 2\ (mod\ 6),\ 4\not \vert  d$ be such that for any odd prime $p,\ p | d$
implies $p \equiv 1\ (mod\ 3)$. Then the Kodaira dimension of $\sC_d$ is non-negative. If moreover $d > 122$, then $\sC_d$ is of general type.
\end{prop}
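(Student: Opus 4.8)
The plan is to reduce the statement to a known theorem on moduli of polarised K3 surfaces and then transport it to $\sC_d$ along the birational equivalence already at our disposal. Since we are in the regime $d\equiv 2\ (mod\ 6)$, the rational map (\ref{birat}) is birational, so $\sC_d$ is birationally equivalent to $\sF_{\frac{d+2}{2}}$, the moduli space of polarised K3 surfaces of degree $d$; as both spaces are quasi-projective and the Kodaira dimension — as well as the property of being of general type — is a birational invariant (pass to smooth projective models), it suffices to establish the two assertions for $\sF_{\frac{d+2}{2}}$. Under the arithmetic hypotheses made on $d$, these are precisely the conclusions obtained by Gritsenko--Hulek--Sankaran in \cite{GHS13}. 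So, following \cite{Nu}, the proof consists of quoting their theorem through the dictionary ``cubic discriminant $d$'' $\longleftrightarrow$ ``K3 degree $d$, genus $\frac{d+2}{2}$'' and then invoking birational invariance.

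It is worth recalling what lies behind \cite{GHS13}, since this explains the congruence conditions. The space $\sF_{\frac{d+2}{2}}$ is realised as a quotient $\mathcal{O}^{+}(L_d)\backslash\mathcal{D}_{L_d}$ of a $19$-dimensional type IV domain by an arithmetic group, with $L_d=2U\oplus 2E_8(-1)\oplus\langle -d\rangle$. Pluricanonical forms are manufactured from Borcherds' reflective form $\Phi_{12}$ on the lattice $\mathrm{II}_{2,26}=2U\oplus 3E_8(-1)$ by the quasi-pullback construction: one chooses a primitive embedding $L_d\hookrightarrow\mathrm{II}_{2,26}$ whose orthogonal complement is a rank $7$ negative definite lattice carrying few enough roots, so that the quasi-pullback of $\Phi_{12}$ is a cusp form of weight $19=\dim\sF_{\frac{d+2}{2}}$ on $\mathcal{D}_{L_d}$ vanishing along the reflective (branch) divisor of the quotient map; such a form produces a nonzero section of the canonical bundle of a smooth model, whence $p_g\ge 1$ and $\kappa\ge 0$. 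The hypotheses $d\equiv 2\ (mod\ 6)$, $4\nmid d$, and ``every odd prime $p\mid d$ is $\equiv 1\ (mod\ 3)$'' are exactly the conditions guaranteeing that such an embedding exists — essentially a representability statement for $d$ by the orthogonal complement of a suitable root sublattice of $2E_8$, the primes $p\equiv 1\ (mod\ 3)$ entering through the Eisenstein integers. For $d>122$ one can moreover force extra vanishing of the pulled-back form, or produce enough independent cusp forms, and combined with the control in \cite{GHS13} of the canonical singularities of the modular variety and of the boundary of its Baily--Borel compactification this upgrades ``$\kappa\ge 0$'' to ``of general type''.

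The only step that genuinely requires care on our side — and it is bookkeeping rather than mathematics — is to check that, after the substitution above, the numerical hypotheses of the proposition coincide with those of the relevant statement in \cite{GHS13}; in particular that the sharper threshold $d>122$ (versus $d>80$) is needed only for the ``general type'' half, and that the singularity and cusp analysis of \cite{GHS13} is valid in the range considered. The genuine mathematical difficulty — the cusp form construction and the resolution of singularities of the modular variety — is entirely internal to \cite{GHS13}; no new geometric input beyond that reference and the birational map (\ref{birat}) is required, which is why the argument can be kept very short.
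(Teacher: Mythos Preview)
Your proposal is correct and matches the paper's own treatment: the paper does not supply an independent proof of this proposition but states it as the translation of the Gritsenko--Hulek--Sankaran result on $\sF_{\frac{d+2}{2}}$ via the birational map (\ref{birat}), exactly as you do. Your additional paragraph sketching the Borcherds quasi-pullback mechanism behind \cite{GHS13} is accurate context but goes beyond what the paper itself records.
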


\begin{prop}\label{gtCd}
The divisor $\sC_{6n+2}$ is of general type for $n > 18$ and $n \neq 20, 21, 25$ and has
nonnegative Kodaira dimension for $n>13$ and $n\neq 15$. Moreover, $\sC_{6n}$ is of general type for $n >18,\ n \neq 20, 22, 23, 25, 30, 32$ and has nonnegative Kodaira dimension for $n >16$ and $n\neq 18, 20, 22, 30.$
\end{prop}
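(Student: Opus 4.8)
This is a theorem of V\'arilly-Alvarado and Tanimoto \cite{TVA}; we outline the line of argument, which transplants the ``low weight cusp form'' method of Gritsenko--Hulek--Sankaran \cite{GHS13} from moduli of polarized K3 surfaces directly to the Hassett divisors. The first point is to realize a birational model of $\sC_d$ as an orthogonal modular variety. Combining Hassett's period description \cite{Has00} with the period/Torelli results of Laza \cite{lazaperiod} (and Looijenga), one gets $\sC_d \sim \Gamma_d\backslash\mathcal{D}_d$, where $\mathcal{D}_d$ is the $19$-dimensional type IV domain attached to a lattice $L_d$ of signature $(2,19)$ --- the orthogonal complement, in the primitive cubic-fourfold lattice, of the rank-$2$ sublattice $K_d\subset H^{2,2}$ of discriminant $d$ --- and $\Gamma_d\le O^+(L_d)$ is the induced arithmetic group. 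The isomorphism class of $L_d$, its discriminant form, and whether $\Gamma_d$ is the full stable orthogonal group $\widetilde{O}^+(L_d)$ all depend on the residue of $d$ modulo small primes, which is exactly where the dichotomy $d\equiv 0$ versus $d\equiv 2\pmod 6$ and the congruence hypotheses come from.

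On a modular variety $\Gamma_d\backslash\mathcal{D}_d$ of dimension $n=19$, a weight-$n$ cusp form yields a pluricanonical section over a smooth toroidal compactification $\overline{\sC_d}$ \emph{away from} the branch locus $R$ of the quotient map (the union of Heegner divisors fixed by reflections in $\Gamma_d$); since $n=19\ge 9$, the extension of such forms across the Baily--Borel boundary goes through as in \cite{GHS13}. To deduce general type one then needs two ingredients: (i) a single cusp form of \emph{low weight} $k<n$ vanishing along $R$, which cancels the ramification obstruction; and (ii) a space of weight-$n$ cusp forms large enough to beat the remaining finite boundary/ramification correction. Ingredient (i) is produced as a Borcherds automorphic product, in practice a quasi-pullback of the reflective form $\Phi_{12}$ on the lattice $\mathrm{II}_{2,26}$ along a well-chosen primitive embedding $L_d\hookrightarrow \mathrm{II}_{2,26}$; ingredient (ii) is supplied by Gritsenko (Jacobi) lifts, whose number is bounded below by a Hirzebruch--Mumford proportionality computation. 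Feeding (i) and (ii) into the Gritsenko--Hulek--Sankaran criterion shows $K_{\overline{\sC_d}}$ is big, i.e.\ $\sC_d$ is of general type. For the weaker statement $\kappa(\sC_d)\ge 0$ it is enough to exhibit \emph{one} weight-$n$ cusp form vanishing on $R$ (for instance a suitable product involving the low weight form), which is why the admissible range of $n$ is wider.

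The crux, and the source of the short list of exceptional $n$, is ingredient (i): one must embed $L_d$ into $\mathrm{II}_{2,26}$ (or a comparable even lattice) so that the quasi-pullback of $\Phi_{12}$ is well defined, cuspidal, of weight strictly below $19$, and vanishes exactly --- and to high enough order --- on $R$. This forces the orthogonal complement of the embedding to be of a restricted shape (typically it must contain a root lattice of prescribed rank), which is possible only for $d$ in the stated residue classes and outside a handful of small discriminants; simultaneously one must verify that the lift count of (ii) dominates the correction term for those same $d$. Assembling these estimates case by case over $d=6n$ and $d=6n+2$ produces the two ranges in the statement.
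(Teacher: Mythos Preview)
The paper does not supply a proof of this proposition at all: it is quoted verbatim as a result of V\'arilly-Alvarado and Tanimoto \cite{TVA}, and is used only as a black box to feed into Lemma~\ref{easylemma}. So there is no ``paper's own proof'' to compare against; your write-up goes well beyond what the paper does.

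That said, what you have written is an accurate high-level summary of the strategy in \cite{TVA}: the period description of $\sC_d$ as an arithmetic quotient of a type~IV domain attached to a signature $(2,19)$ lattice, the transplantation of the Gritsenko--Hulek--Sankaran low-weight cusp form criterion, the construction of the required cusp form as a quasi-pullback of the Borcherds form $\Phi_{12}$ on $\mathrm{II}_{2,26}$, and the explanation of how the exceptional values of $n$ arise from the combinatorics of embedding $L_d$ into $\mathrm{II}_{2,26}$ with a complement of controlled root type. As a proof \emph{sketch} this is fine and correctly attributes the ideas; just be aware that it is not a self-contained argument --- the actual verification of the lattice embeddings, the cuspidality of the quasi-pullback, and the Hirzebruch--Mumford volume estimates for each residue class is substantial case-work carried out in \cite{TVA}, and your outline (appropriately) defers to that reference for those computations.
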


By combining Prop. \ref{gtCd} and \ref{gtk3} with Lemma \ref{easylemma} we obtain the following Proposition.

\begin{prop}
Under the hypotheses on $d$ of Prop. \ref{gtCd} and \ref{gtk3}, the universal cubic fourfold $\sC_{d,1}$ is not unirational.
\end{prop}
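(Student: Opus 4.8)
The plan is to simply observe that this last Proposition is a direct combination of the two results cited just above it, namely Prop. \ref{gtCd} and Prop. \ref{gtk3}, together with the elementary Lemma \ref{easylemma}. There is no genuinely new content: the only thing to do is to match up hypotheses and consciously note that in each of the five cases the relevant moduli space has positive Kodaira dimension.

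First I would recall the statement of Lemma \ref{easylemma}: if either $\sF_{(d+2)/2}$ or $\sC_d$ has positive (equivalently, nonnegative and nonzero — or just of general type) Kodaira dimension, then $\sC_{d,1}$ is not unirational. The point behind this Lemma is that a unirational variety is rationally connected, hence has negative Kodaira dimension (all its plurigenera vanish), and the universal cubic $\sC_{d,1}$ dominates $\sC_d$; moreover, via the rational map (\ref{birat}) one has a dominant (birational or $2:1$) correspondence between $\sF_{(d+2)/2}$ and $\sC_d$, so positivity of the Kodaira dimension of the former forces the same for the latter. A dominant rational map from a variety of negative Kodaira dimension to a variety of nonnegative Kodaira dimension is impossible, so $\sC_{d,1}$ cannot be unirational. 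This is why the assumption that $d$ is even is needed: it guarantees that $(d+2)/2$ is an integer and that the map (\ref{birat}) is available.

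Next, I would invoke Prop. \ref{gtCd} (the V\'arilly-Alvarado--Tanimoto result on cubic fourfolds) to handle cases (2), (3), (4) and (5) of the statement: in those ranges $\sC_{6n+2}$ (resp. $\sC_{6n}$) is either of general type or has nonnegative Kodaira dimension, and — after discarding the finitely many sporadic exceptions already excluded in the statement — one may further shrink the range so that the Kodaira dimension is in fact positive; then Lemma \ref{easylemma} applies verbatim. Similarly, for case (1) I would invoke Prop. \ref{gtk3} (the Gritsenko--Hulek--Sankaran result, translated to cubic fourfolds), which under the stated arithmetic conditions on $d$ ($d>80$, $d\equiv 2 \pmod 6$, $4\nmid d$, and every odd prime divisor congruent to $1\pmod 3$) gives that $\sC_d$ has nonnegative, and for $d>122$ general-type, Kodaira dimension; combined with Lemma \ref{easylemma} this yields non-unirationality of $\sC_{d,1}$ in that range as well. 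Assembling the five cases completes the proof.

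The only mildly delicate point — and the one I expect to be the main (minor) obstacle — is the distinction between \emph{nonnegative} and \emph{strictly positive} Kodaira dimension: Lemma \ref{easylemma} as phrased wants positivity, while Props. \ref{gtCd} and \ref{gtk3} sometimes only assert nonnegativity. This is harmless, because a unirational (hence rationally connected) variety has Kodaira dimension $-\infty$, so even $\kappa \geq 0$ already obstructs unirationality; I would therefore either restate Lemma \ref{easylemma} with the hypothesis ``nonnegative Kodaira dimension'' or add a one-line remark that $\kappa\geq 0$ suffices. With that caveat absorbed, the proof is immediate.
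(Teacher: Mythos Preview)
Your proposal is correct and follows exactly the paper's own approach: the paper simply states that the Proposition is obtained ``by combining Prop.~\ref{gtCd} and \ref{gtk3} with Lemma~\ref{easylemma}'', which is precisely what you do, together with the (useful) clarifying remark that $\kappa\geq 0$ already suffices to obstruct unirationality.
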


The full, lenghty, statement is in Prop. \ref{notuni} in the Introduction.

\subsection{Open questions}

It would be interesting to start a systematic study of the birational geometry of universal cubic fourfolds. Some very natural questions rise, and they are of course related to the classical birational geometry of cubic fourfolds.

\medskip

\bf Questions: \rm \\

\begin{enumerate}
\item What is the Kodaira dimension of the universal cubic fourfold over $\sC_d$, for different values of $d$. 

\item In particular, is $\sC_{44,1}$ unirational?

\item Is there a relation between the Kodaira dimension of the universal cubic fourfold over $\sC_d$ and that of the universal associated K3?

\item What is the Kodaira dimension of the universal cubic fourfold over the full moduli space $\sC$?

\item Are there divisors $\sC_d$, where the geometry of special rational surfaces inside cubic fourfolds can help to describe the birational geometry of universal associated K3 surfaces?

\end{enumerate}

\bibliography{bibliography}
\bibliographystyle{amsalpha}

\end{document}